\newcommand{\R}{\mathbb{R}}
\newcommand{\be}{\begin{equation}}
\newcommand{\ee}{\end{equation}}
\newcommand{\ba}{\begin{aligned}}
\newcommand{\ea}{\end{aligned}}
\newcommand{\eps}{\epsilon}
\newtheorem{Theorem}{Theorem}
\newtheorem{Lemma}{Lemma}
\newtheorem{Rem}{Remark}
\newtheorem{Prop}{Proposition}
\newtheorem{Cor}{Corollary}
\newcommand{\triplenorm}[1]{{\left\vert\kern-0.25ex\left\vert\kern-0.25ex\left\vert #1 
    \right\vert\kern-0.25ex\right\vert\kern-0.25ex\right\vert}}
\title{The equilibrium price of bubble assets}
\begin{document}





\author{Charles Bertucci \textsuperscript{a}, Jean-Michel Lasry\textsuperscript{b}, Pierre-Louis Lions\textsuperscript{b,c}
}
\address{ \textsuperscript{a} CMAP, CNRS, UMR 7641, \'Ecole polytechnique, IPP, 91120 Palaiseau, France\\ \textsuperscript{b} Universit\'e Paris-Dauphine, PSL Research University,UMR 7534, CEREMADE, 75016 Paris, France\\ \textsuperscript{c}Coll\`ege de France, 3 rue d'Ulm, 75005, Paris, France. }

\maketitle
\begin{abstract} Considering a simple economy, we derive a new Hamilton-Jacobi equation which is satisfied by the value of a "bubble" asset. We then show, by providing a rigorous mathematical analysis of this equation, that a unique non-zero stable solution exists under certain assumptions. The economic interpretation of this result is that, if the bubble asset can produce more stable returns than fiat money, agents protect themselves from hazardous situations through the bubble asset, thus forming a bubble's consensus value.
Our mathematical analysis uses different ideas coming from the study of semi-linear elliptic equations. 
\end{abstract}



\setcounter{tocdepth}{1}

\tableofcontents
\section{Introduction}
In his overlapping generations model of 1958, Samuelson \citep{samuelson1958} showed that, to protect themselves against the perishability of the natural goods, agents can form a consensus on the interest rate of a useless non-perishable good. With his simple model, conclusions have been drawn that a consensus on a money is a simple and efficient way to transfer capital to its future self. A main critic one could made to such a consequence is that, because it is controlled by governments, and thus the subject of hazardous public policies, the perishability of fiat money is questionable.

In this paper, we somehow reproduce the argument of Samuelson on money itself, by showing that to protect themselves against public policies, agents can form a consensus on useless goods, which do not need to be non-perishable, as long as they do not perish too much. By doing so, the agents create another money than fiat money. A typical example of such goods are cryptocurrencies, which present (at first) surprisingly stable prices, given that they do not yield returns. The existence of equilibrium prices of such bubble goods\footnote{By bubble good, or asset, we mean an asset which does not produce any return but still has a market value.} is now a classic topic of economics, stemming long before cryptocurrencies, see for instance Tirole \citep{tirole}, even if such equilibrium prices are quite often relying on the belief that the bubble will inflate. We shall see that in certain cases, the bubble can remain surprisingly stable. 

To explain the value of fiat money, Wallace \citep{wallace} showed that, despite being a priori a bubble, fiat money has a value because it facilitates exchanges on the market, thus proposing a fundamental upon which to value fiat money. 

Here we show that such other qualities are not necessary for a bubble good to have a stable value, as long as it almost non-perishable, since it can then serve as a reserve or assurance against future risks, thus placing ourselves much more in the spirit of Samuelson \citep{samuelson1958}. Note that the models mentioned above relied on overlapping generations, which we do not particularly need to consider here. We shall simply consider agents with a risk aversion and an economy with some risk on the public policy.\\

We shall end our paper, as Samuelson, with quite provocative conclusions. Namely we show that a bubble's consensus value (C value later on) exists and exhibits remarkable stability properties, which can be seen as counterintuitive for a bubble asset. Those conclusions stand upon a thorough mathematical analysis of a new equation which we derive as a consequence of a reached equilibrium on the price of the bubble asset. The main economic force at play in our model is the risk aversion of the agents, which incentives them to assure themselves against macro-economic risks, which they can do by forming a consensus on another good, provided it enjoys "sufficient" stability. This reasoning is quite obvious in itself and would not need any new mathematical development to be established. Nonetheless, our study puts this argument in a quantitative model that allows for further developments. For instance, our model exhibits a threshold on the "sufficient" stability of the bubble asset and since we can compute this threshold, we can give a precise answer to what "sufficient" means here.\\

We believe that attempts to derive, from rigorous mathematical analysis, stable equilibrium prices for bubble assets are quite scarce, and to the best of our knowledge, the work which bears some resemblance with this one is Biais, Rochet and Villeneuve \citep{brv}, upon which we comment more extensively later on in the paper.\\

The rest of our paper is organized as follows. In section \ref{sec:derivation}, we derive from a quite general model the equation to characterize equilibrium prices of bubble assets, which we analyze in details in section \ref{sec:math}. We then specify our model on the two examples of cryptocurrencies and real estate later on in section \ref{sec:eco}, then giving economic interpretations of our results, before concluding our work, namely with future directions of research. In particular, even if we advise to read this paper as a whole, economists might be tempted to jump directly from section \ref{sec:derivation} to section \ref{sec:eco}, while mathematicians could be more interested in section \ref{sec:math}.

\section{Generic model and derivation of the main equation}\label{sec:derivation}
We start by presenting our model in a simple one dimensional setting, before explaining how it can be easily generalized later on.
\subsection{The one dimensional case}
Time is continuous. There are two goods in the economy, one of them being the numeraire and the other one being called the "other" good. There is a quantity $K$ of "other" good. There is a finite number of agents whose decisions consists in choosing solely of the repartition of their capital between the two goods. We assume that the state of the world is described by an  element $x \in [0,1]$. At time $t$, the state of the world is denoted by $X_t\in [0,1]$. We assume that $(X_t)_{t \geq 0}$ is a Markov process on a standard probabilistic space. At time $t$, $X_t$ is observed by all agents and: is given as the strong solution of the stochastic differential equation
\[
dX_t = b(X_t)dt + \sigma dW_t,
\] 
which is assumed to be reflected at $0$ and $1$\footnote{We omit here the reflection term for the sake of clarity.}.
 When the world is in the state $x \in [0,1]$, we assume that the two goods produce (algebraic) interests given by $r_0(x)$ for the numeraire and $r_1(x)$ for the other good. The functions $r_0,r_1 : [0,1] \to \R$ are known by the agents. Note that we do not assume that the goods produce other types of return, hence the goods do not have intrinsic values.\\
 
Now that the physics of our model has been introduced, we can describe its economics. In the world we shall study, agents can exchange between them the two goods in a market. If the exchanges of the same good do not bear any interest, the exchanges of different goods yield the price of the second good expressed in the numeraire. We note $p_t$ this price at time $t$. We assume that exchanges are always possible at the market price $p_t$. Given this price, the agents can value their portfolio.  We assume that the agents have a utility function $U(\cdot)$, which only depends on the value, in numeraire, of their portfolio. Finally, because of the stationarity of our model, we assume that the agents seek to maximize the growth rate of their portfolio made of the two goods, valued in the numeraire.\\

An equilibrium price is a profile $(p_t)_{t \geq 0}$ such that for all time $t \geq 0$, the agents will agree to trade the goods at the price $p_t$. Such a price is indeed an equilibrium since, a priori, because the agents valued their portfolio in the numeraire, they are only ready to buy the "other" good, given that they anticipate that they will be able to sell it later on, as nobody wants to be stuck with unproductive valueless items.

 Our main objective is to characterize equilibrium prices in this market. In order to do so, we shall make the important assumption that the price is a function of the state of the economy, that is $p_t = u(X_t)$ for some function $u : [0,1] \to \R$. 
 \begin{Rem}
 We insist upon this assumption, as it is by no means obvious. In standard economics literature, quite different equilibria have been considered, namely bubble ones. Indeed if all agents share the common belief that $p_t$ is going to grow in an arbitrary large fashion, then it seems standard to prove that they can reach a consensus on such growth, thus proving existence of equilibria which are not of the form $p_t = u(X_t)$ but rather $p_t = v(t,X_t)$. We do not claim that such equilibria do not exist, nor that their study bears no interest, but our aim is here to show that a more intrinsic value of $p_t$ exists as we shall see, and by more intrinsic we mean that it is only a function of the state of the economy and not of some exterior belief.
 \end{Rem}
 
 \subsection{The decision problem of the agents}

 We now turn to the rather classical and not so difficult problem of the agents. Since we assumed they are eternal, it is somehow forced upon us to assume that they are going to maximize the growth rate of their wealth. Consider an agent with a wealth of $q$, of course expressed in the numeraire, at time $t$. The only decision of the agent is to decide how much it wants to invest in the other good. Observe that it is true because, since there are no frictions on the market, before making this investment decision, the agent can always sell all of what it owns in the "other" good, with the possibility of re-buying it instantly at the same price. 
 \begin{Prop}
 If all the functions involved are smooth, the problem of maximization of the growth rate of the portfolio of an agent with wealth $q$, when the economy is in state $x \in (0,1)$, is given by
\[
\ba
\max_{\theta\in \R} \quad &\bigg\{ qU'(q)\bigg( r_0(x)(1 - \theta) + r_1(x)\theta + \theta (u(x))^{-1} ( u'(x)\cdot b(x) + \frac{\sigma^2}{2} u''(x))\\
 &\quad \quad \quad\quad\quad +  \frac{U''(q)q}{U'(q)}\frac{\theta^2\sigma^2}{2u^2(x)}|u'(x)|^2\bigg)\bigg\},
\ea
\]
where the maximization is done over $\theta$, the proportion of "other" good the agent needs to buy.
 \end{Prop}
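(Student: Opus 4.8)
The plan is to derive the expression inside the maximization as the infinitesimal growth rate of the logarithm of the agent's wealth, obtained by applying Itô's formula to a suitably chosen value function and then extracting the leading-order drift. Concretely, an agent holding proportion $\theta$ of its wealth $q$ in the ``other'' good has a portfolio whose value $Q_t$ evolves according to two contributions: the interest produced by each good, namely $r_0(x)(1-\theta)$ on the numeraire part and $r_1(x)\theta$ on the other-good part, and the capital gain coming from the fluctuations of the price $p_t = u(X_t)$. First I would write $dQ_t/Q_t$ as the sum of these two effects, using that the quantity of the ``other'' good held is $\theta q / u(x)$, so that the price dynamics contribute a term proportional to $du(X_t)/u(X_t)$.

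The key step is to compute $du(X_t)$ by Itô's formula applied to $u(X_t)$ with $dX_t = b(X_t)\,dt + \sigma\,dW_t$, which yields
\[
\frac{du(X_t)}{u(X_t)} = (u(x))^{-1}\Big(u'(x)\,b(x) + \tfrac{\sigma^2}{2}u''(x)\Big)\,dt + (u(x))^{-1}\sigma u'(x)\,dW_t.
\]
Multiplying the martingale part by $\theta$ gives the diffusion coefficient of $dQ_t/Q_t$, namely $\theta\sigma u'(x)/u(x)$. I would then assemble the drift of $dQ_t/Q_t$ from the interest terms and the $dt$-part above, and collect the Brownian coefficient into a volatility term $\theta^2\sigma^2|u'(x)|^2/u^2(x)$.

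Next I would translate the objective ``maximize the growth rate of wealth valued through the utility $U$'' into the maximization of the drift of $dU(Q_t)$. Applying Itô's formula to $U(Q_t)$, the drift is $U'(q)\,(\text{drift of }Q_t) + \tfrac12 U''(q)\,(\text{quadratic variation of }Q_t)$. Substituting $dQ_t = Q_t\,(\cdots)$ with $Q_t = q$ and factoring $qU'(q)$ out, the first interest and price-drift terms appear multiplied by $qU'(q)$, while the quadratic-variation term produces the factor $\tfrac{U''(q)q}{U'(q)}$ multiplying $\tfrac{\theta^2\sigma^2}{2u^2(x)}|u'(x)|^2$, exactly matching the stated expression. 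The maximization over $\theta \in \R$ is then the natural optimization of this local growth rate.

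The main obstacle is conceptual rather than computational: one must argue carefully that maximizing the long-run growth rate of the portfolio reduces, under the stationarity and Markov assumptions, to the pointwise maximization of the infinitesimal drift of $U(Q_t)$ displayed above. This requires justifying that, because the agent can continuously rebalance without friction and the problem is autonomous in time, the optimal control is a feedback function of the current state and wealth alone, so that the ergodic growth-rate criterion is governed locally by the generator of $U(Q_t)$. The remaining steps — the Itô expansions and the algebraic factoring of $qU'(q)$ — are routine once this reduction is in place.
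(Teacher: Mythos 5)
Your proposal is correct and follows essentially the same route as the paper: the paper likewise writes the wealth process (in explicit integrated form $Y_{t,s}$ over a short horizon $[t,t+h]$), applies It\^o's formula to the price $p_s = u(X_s)$ and then to $U(Y_{t,s})$, and identifies the growth rate with $\lim_{h\to 0} h^{-1}\left(\mathbb{E}[U(Y_{t,t+h})\,|\,X_t]-U(q)\right)$, i.e.\ the infinitesimal drift of $U$ of the wealth, which is exactly the quantity you maximize. The reduction you flag as the ``main obstacle'' is handled in the paper simply by taking that limit as the \emph{definition} of the growth rate, so no further ergodic argument is required.
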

 This result is a consequence of the fact that, because of the assumption $p_t = u(X_t)$, we can express the future price $p_{t + dt}$ thanks to Ito's Lemma, as $p_{t +dt} = p_t + dt\{u'(X_t)b(X_t) +\sigma^2/2u''(X_t)\} + \sigma^2/2|u'(X_t)|^2dW_t$, as well as the fact that the numeraire yields $r_0(X_t)$ and the "other" good $r_1(X_t)$. The precise proof follows.
 \begin{proof}
We shall omit the terms arising from the reflection of the process, as we are able to do so if we consider everything before the first reaching time $\{0,1\}$, which is an adapted stopping time. If our agent decides to invest a proportion $\theta$ of its wealth at time $t$ in the "other" good, then at time $s \geq t$, its wealth $Y_{t,s}$ is given by
 
 \[
 Y_{t,s} = q(1 -\theta)e^{\int_t^{s}r_0(X_{t'})dt'} + \frac{\theta q}{p_t} p_{s}e^{\int_t^{s}r_1(X_{t'})dt'}.
 \]
For $s \in [t,t+h]$, we have
 \[
 dY_{t,s} = r_0(X_s)q(1 -\theta)e^{\int_t^{s}r_0(X_{t'})dt'} ds + \frac{\theta q}{p_t}  e^{\int_t^{s}r_1(X_{t'})dt'}dp_s + r_1(X_s)\frac{\theta q}{p_t}  p_{s}e^{\int_t^{s}r_1(X_{t'})dt'}ds.
 \]
 Using Ito's Lemma, we obviously have
 \[
 dp_s = u'(X_s)b(X_s)ds + \frac{u''(X_s)\sigma^2}{2}ds + u'(X_s)\sigma dW_s.
 \]
 Hence we arrive at
  \[
  \ba
 dY_{t,s} = \bigg[ &r_0(X_s)q(1 -\theta)e^{\int_t^{s}r_0(X_{t'})dt'}  + \frac{\theta q}{p_t}  e^{\int_t^{s}r_1(X_{t'})dt'}\left( u'(X_s)b(X_s)ds + \frac{u''(X_s)\sigma^2}{2}\right)\\
 & + r_1(X_s)\frac{\theta q}{p_t}  p_{s}e^{\int_t^{s}r_1(X_{t'})dt'}\bigg]ds + \frac{\theta q}{p_t}  e^{\int_t^{s}r_1(X_{t'})dt'}u'(X_s)\sigma dW_s.
 \ea
 \]
  Thus, we can write down, once again thanks to Ito's Lemma,
 \[
 \ba
U\left(Y_{t,t+h} \right) = U(q) + \int_t^{t+h}U'\left(Y_{t,s}\right)dY_{t,s} + \frac12\int_t^{t+h} U''(Y_{t,s})\frac{\theta^2q^2}{p_t^2}e^{2\int_t^{s}r_1(X_{t'})dt'}\sigma^2|\nabla_x u(X_s)|^2 ds.
 \ea
 \]
 Taking the conditional expectation on $X_t$, we arrive at the relation 
 \[
 \ba
\mathbb{E}[U\left(Y_{t,t+h} \right)|X_t] = U(q) + \mathbb{E}\bigg[&\int_t^{t+h}U'\left(Y_{t,s}\right)q\bigg( r_0(X_s)(1 -\theta)e^{\int_t^{s}r_0(X_{t'})dt'} + r_1(X_s)\theta\frac{ u(X_s)}{u(X_t)}e^{\int_t^{s}r_1(X_{t'})dt'}+\\
& + \frac{\theta }{u(X_t)}e^{\int_t^{s}r_1(X_{t'})dt'}(u'(X_s)\cdot b(X_s) + \frac{\sigma^2}{2} u''(X_s))\\
&+  \frac{U''(Y_{t,s})q}{U'(Y_{t,s})}\frac{\theta ^2\sigma^2}{2u^2(X_t)}e^{2\int_t^{s}r_1(X_{t'})dt'}| u'(X_s)|^2 \bigg)ds|X_t\bigg].
 \ea
 \]
The agent then should choose $\theta$ to maximize its earning's growth rate $\lim_{h \to 0}\frac{\mathbb{E}[U\left(Y_{t,t+h} \right)|X_t]- U(q) }{h}$, that is $\theta$ should be taken as the solution of
\[
\ba
\max_{\theta\in \R} \quad & qU'(q)\bigg( r_0(X_t)(1 - \theta) + r_1(X_t)\theta + \theta (u(X_t))^{-1} ( u'(X_t)\cdot b(X_t) + \frac{\sigma^2}{2} u''(X_t))\\
 &\quad \quad \quad\quad\quad +  \frac{U''(q)q}{U'(q)}\frac{\theta^2\sigma^2}{2u^2(X_t)}|u'(X_t)|^2\bigg).
\ea
\]
\end{proof}
The optimal level of investment $\theta$ always depends on the state of the economy $x$, and in general on $q$ as well. However, as usual for this type of problems, for some well known utility functions we can find explicit functions of $q$. For instance, for the CRRA utility function with parameter $\gamma \in (0,1)$, i.e. $U(q) = q^{1-\gamma}$, we find that the optimal level of investment does not depend on $q$ and is given by
\be\label{CRRA}
\theta^*(x) := u(x)\frac{ u(x)(r_1(x)- r_0(x)) +u'(x)\cdot b(x) + \frac{\sigma^2}{2} u''(x)}{\gamma\sigma^2|u' (x)|^2}.
\ee
\begin{Rem}
Note that the previous fraction is really of the form of the solution of the famous Merton's portfolio problem \citep{merton}. Indeed, the first term of the numerator represents the return of the "other" good relatively to the numeraire, the second term is simply the infinitesimal variation of price while the denominator is the squared of the volatility of the price $p_t$ multiplied by the risk parameter $\gamma$.
\end{Rem}
For the CARA utility function, $U(q) = 1- e^{-c q}$ for $c >0$, we find that the optimal $\theta$ is given by
\be\label{CARA}
\theta^*(q,x) = u(x) \frac{ u(x)(r_1(x)- r_0(x)) +u'(x)\cdot b(x) + \frac{\sigma^2}{2} u''(x)}{qc\sigma^2|u' (x)|^2}.
\ee
It now suffices to observe that in this case, the total demand, expressed in quantity of "other" good is constant and does not depend on the wealth of the agent. It is given by the quantity
\be\label{defDt}
\frac{ u(x)(r_1(x)- r_0(x)) +u'(x)\cdot b(x) + \frac{\sigma^2}{2} u''(x)}{c\sigma^2|u' (x)|^2}.
\ee

\subsection{Market clearing}
To close our model and finally derive the PDE satisfied by the price function $u$, we need to specify how does the market clear. On the side of the supply, the situation is quite simple, since we assume that the available quantity of "other" good is constant in time, given by $K$. Hence, we shall assume that any time, the supply is given by $K$.
\begin{Rem}
The fact that the supply $K$ is constant might seem surprising since we made the assumption that $r_1 \ne 0$. In cases where $r_1$ might be positive, this assumption is hard to verify, but in cases where $r_1$ is non-positive, it simply means that at every period of time, an amount of "other" good is added in the economy to match what has been lost through $r_1$.
\end{Rem}
On the side of the demand, the question is much more subtle and it can be expected since of course we are modelling prices of bubble assets!\\

\textbf{In the case of the CARA utility function}, we arrived at the conclusion that the total demand of all agents was given by $ND_t$ where $D_t$ is defined in \eqref{defDt}. Hence, in this situation, equaling supply and demand simply yields that
\[
u(X_t)(r_1(X_t)- r_0(X_t)) +u'(X_t)\cdot b(X_t) + \frac{\sigma^2}{2} u''(X_t) = \frac{cK\sigma^2}{N}|u'(X_t)|^2.
\]
Since this relation has to be satisfied for all $t$ and that the law of $X_t$ has full support on $(0,1)$, we arrive at the PDE
\be\label{pdev1}
u(x)(r_1(x)- r_0(x)) +u'(x)\cdot b(x) + \frac{\sigma^2}{2} u''(x) = \frac{cK\sigma^2}{N}|u'(x)|^2 \text{ in } (0,1).
\ee
Since, the process is reflected at $0$ and $1$, the previous PDE is naturally associated with Neumann boundary conditions $u'(x) = 0$ for $x \in \{0;1\}$.\\

\textbf{In the case of the CRRA utility funtion}, since we derived an optimal fraction of the wealth to be invested, independently of the individual wealth, we naturally obtain the market clearing condition
\[
 \left\{u(X_t)(r_1(X_t)- r_0(X_t)) +u'(X_t)\cdot b(X_t) + \frac{\sigma^2}{2} u''(X_t)\right\}Q_t = \gamma K\sigma^2|u' (X_t)|^2,
\]
where $Q_t$ is the total wealth of the population. The dependence of $Q_t$ on time prevents us to derive a proper PDE as in the previous case. The way we formulated our model, this dependence cannot be avoided as the total wealth is supposed to evolve according to evolution of $X_t$ through the dependencies upon it of $r_0,r_1$ and $u$. Nonetheless, under the assumption that, for some arbitrary exterior reasons, the total wealth of the population which is possibly invested in the "other" good is fixed at the level $Q$, we can indeed arrive at the PDE
\be\label{pdev2}
u(x)(r_1(x)- r_0(x)) +u'(x)\cdot b(x) + \frac{\sigma^2}{2} u''(x) = \frac{\gamma K\sigma^2}{Q}|u'(x)|^2 \text{ in } (0,1),
\ee
also associated with homogeneous Neumann boundary conditions. 

We remark that the form of this PDE is exactly the same of the previous one, which justifies the mathematical analysis of such equations in the next section. We also insist upon the fact that our assumption that $Q_t$ is constant is by no means trivial nor free of consequences. Nonetheless, we believe it gives an interesting idea of what can happen for the price of such assets, and that it is a plausible approximation of the reality. Note that a more sensible choice could have been to consider that\footnote{The notation $[\cdot]$ stands for the fact that $Q$ can depend on the whole function $u$, i.e. in a non-local manner.} $Q_t = Q[u](X_t)$. We are then able to also obtain a PDE which is given by
\be\label{pdev3}
u(x)(r_1(x)- r_0(x)) +u'(x)\cdot b(x) + \frac{\sigma^2}{2} u''(x) = \frac{\gamma K\sigma^2}{Q[u](x)}|u'(x)|^2 \text{ in } (0,1),
\ee


Note that $u \equiv 0$ is always a solution of \eqref{pdev1}, \eqref{pdev2} or \eqref{pdev3}, which is natural since, if the anticipated price of the good is $0$, no one is interested in it and its current price is then also $0$. In the next section, we provide a detailed mathematical analysis of such equations. We shall prove that, under certain assumptions, a non-zero solution of this PDE exists, and enjoys remarkable stability properties. We then come back on economical implications of such a fact in section \ref{sec:eco}.

\subsection{Generalization of our model}
Before entering the mathematical analysis of the next section, we indicate a simple and immediate extension of our model. Assume that the process $(X_t)_{t \geq 0}$ does not live in $(0,1)$ but rather in some smooth bounded domain $\Omega \subset \R^d$, that it is still reflected at the boundary and a solution of the SDE
\[
dX_t = b(X_t)dt + \sigma(x) dW_t,
\]
for $(W_t)_{t \geq 0}$ a standard $d$ dimensional Brownian motion. In addition, assume that the "other" good produces, in a time interval $[t, t+h]$, a revenue expressed in numeraire equal to $\int_{t}^{t+h}f(X_s)ds$, with $f \geq 0$ a smooth given function. Note that $f$ can be equal to $0$ in which case we fall back into the previous situation. In such a situation, the PDE at which we arrive is given by
\be\label{pdev4}
u(x)(r_1(x)- r_0(x)) +\nabla_xu (x)\cdot b(x) + \frac{\sigma^2(x)}{2} \Delta u(x) + f(x) = C|\nabla_x u(x)|^2 \text{ in } \Omega,
\ee
once again with homogeneous Neumann boundary conditions, and where $C$ is equal to $\frac{cK\sigma^2}{N}$ in the case of CARA utility, to $\frac{\gamma K\sigma^2}{Q}$ in the case of the CRRA utility with constant total wealth, and to $\frac{\gamma K\sigma^2}{Q[u]}$ in the case of the CRRA utility with the assumption that $Q_t = Q[u](X_t)$.

\begin{Rem}
Note that the previous modelling is quite in the spirit of the mean field games theory developed by the last two authors \citep{lasry2007mean}. Indeed, even though no proper game is introduced, we derived an equation satisfied by a macro-economic quantity (the price function $u$) from the aggregation of small agents. Remark that we did not insisted upon it, but we indeed assumed that agents were small in the sense that they are price takers, and do not consider that they are going to affect the future price with their behaviour, which amounts to say that they only have an infinitesimal effect on macroscopic quantities.
\end{Rem}

\section{Mathematical analysis}\label{sec:math}
In this section, we analyze the equation
\be\label{kr}
\ba
-\nu\Delta u + \frac 12 |\nabla_x u|^2 = a(x)u \text{ in } \Omega,\\
u(x) > 0 \text{ in } \Omega,\\
\partial_n u = 0 \text{ on } \partial \Omega.
\ea
\ee
where $\Omega$ is a smooth bounded domain of $\R^d$, $\nu > 0$ and $a$ is a smooth function on which additional assumptions shall be made later on.\\

 Equation \eqref{kr} is the general equation derived in the previous section, except for the presence of the drift term $b$ that we omit here since it does not play a strong role in the following and adapting our results to the presence of $b$ is trivial. Equation \eqref{kr} is a quadratic stationary Hamilton-Jacobi-Bellman equation. Compared to the existing mathematical literature, the main novelty of \eqref{kr} is the fact that $a(\cdot)$ depends on $x$ together with the quadratic non-linearity and the stationarity. If one of the previous three elements was absent, then the problem will be classical, whereas here it exhibits new properties.
 
 As we shall see, \eqref{kr} can be interpreted as a sort of non-linear version of Krein-Rutman Theorem. This link will only be made explicit at the end of this section, but some ideas, namely ones introduced by P.-L. Lions in \citep{lions2021cours} for Krein-Rutman like results, shall be used during our analysis.\\

When it is well defined, we shall denote by $\lambda_1(A)$ the first (smallest) eigenvalue of the differential operator $A$ on $\Omega$ with Neumann boundary conditions. Furthermore, we shall use the notation $u >> 0$ to indicate that the function $u: \Omega \to \R$ is bounded from below by a positive constant on $\Omega$. Finally note that for operators $A$ of the form $-\nu \Delta - a $ for some smooth function $a$, $\lambda_1(A)$ is well defined and there is an associate eigenfunction $\phi >>0$.

\subsection{Existence and uniqueness}
This section establishes the main result of the paper, which is the existence and uniqueness of a solution to \eqref{kr}.
\begin{Theorem}\label{thm:1}
Assume that \begin{itemize} \item $\lambda_1(-\nu\Delta - a)< 0$,\item $\min_\Omega a < 0$. \end{itemize} Then, there exists a unique smooth solution of \eqref{kr}.
\end{Theorem}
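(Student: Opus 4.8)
The plan is to eliminate the quadratic gradient term by the Hopf--Cole change of unknown $w=e^{-u/(2\nu)}$. A direct computation turns \eqref{kr} into the semilinear Neumann problem $-\nu\Delta w=a(x)\,w\ln w$ in $\Omega$, $\partial_n w=0$ on $\partial\Omega$, where now $u>0$ corresponds exactly to $w<1$ and $u<+\infty$ to $w>0$, so one seeks $0<w<1$. The decisive gain is that $s\mapsto s\ln s$ is bounded on $[0,1]$ (by $1/e$), hence every solution obeys $\|\Delta w\|_\infty\le\|a\|_\infty/(\nu e)$ and, by Schauder estimates with Neumann data, is uniformly bounded in $C^{2,\alpha}(\overline\Omega)$. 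Compactness is therefore automatic, and the whole difficulty is displaced onto producing a nontrivial solution that does not touch the two degenerate values $0$ and $1$.

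For existence I would argue variationally, since the transformed equation is the Euler--Lagrange equation of
\[ E(w)=\int_\Omega\Big(\tfrac{\nu}{2}|\nabla w|^2-\tfrac{a}{2}\,w^2\ln w+\tfrac{a}{4}\,w^2\Big)\,dx \]
on $\{w\in H^1(\Omega):0\le w\le 1\}$. This functional is coercive and weakly lower semicontinuous (the potential terms are bounded on $[0,1]$), so a global minimizer $w_*$ exists, and the two hypotheses serve precisely to rule out the two trivial competitors. Since the potential satisfies $G_w(x,1)=0$, the constant $w\equiv1$ is a critical point, but its second variation in the direction of the principal eigenfunction $\phi$ of $-\nu\Delta-a$ is $\int_\Omega(\nu|\nabla\phi|^2-a\phi^2)=\lambda_1(-\nu\Delta-a)\int_\Omega\phi^2<0$, so $w\equiv1$ is a saddle and $E(w_*)<E(1)$. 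Likewise, testing near $w\equiv0$ with a bump $\eta$ supported in $\{a<0\}$ (nonempty because $\min_\Omega a<0$) gives $E(\epsilon\eta)=\tfrac12\epsilon^2(-\ln\epsilon)\int_\Omega a\eta^2+O(\epsilon^2)<0=E(0)$ for small $\epsilon$, since $\int_\Omega a\eta^2<0$. Hence the minimizer is neither $\equiv0$ nor $\equiv1$ and yields a genuine solution.

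The main obstacle is then the strict two-sided bound $0<w_*<1$, i.e.\ the a priori boundedness and smoothness of $u$. That $w_*<1$ (that is, $u>0$) follows from the strong maximum principle, using that $w\equiv1$ is a strict supersolution in the $\phi$-direction. The delicate point is the lower bound $w_*>0$ (that is, $u<+\infty$) on the set $\{a>0\}$, where the potential actually \emph{favours} $w=0$; there one must use the diffusion together with the uniform gradient bound from the first step and a Harnack/boundary-point argument to propagate into $\{a>0\}$ the positivity that the variational argument already grants on $\{a\le0\}$. I expect this propagation of strict positivity to be the technically hardest step, and the place where the quantitative interplay between $\nu$, $a$ and the geometry of $\Omega$ is genuinely used.

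For uniqueness the natural tool is a Brezis--Oswald / D\'iaz--Saa argument on the $w$-equation: for two positive solutions $w_1,w_2$ one tests the difference of the equations with $(w_1^2-w_2^2)/w_1$ and $(w_2^2-w_1^2)/w_2$ and invokes the Picone/convexity inequality to make the left-hand side nonnegative, leaving $0\le\int_\Omega a\,(\ln w_1-\ln w_2)(w_1^2-w_2^2)$. As $(\ln w_1-\ln w_2)$ and $(w_1^2-w_2^2)$ always share the same sign, the integrand carries the sign of $a$, so on $\{a\ge0\}$ one directly gets $w_1=w_2$. The genuine difficulty, and presumably the novelty relative to the classical literature, is the region $\{a<0\}$, where this monotonicity has the wrong sign. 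I would close the argument there by exploiting a stabilizing identity for any solution, namely that the linearization $A_u=-\nu\Delta+\nabla u\cdot\nabla-a$ admits the positive supersolution $u$ with $A_u u=\tfrac12|\nabla u|^2\ge0$, so that $\lambda_1(A_u)\ge0$ and the solution is linearly stable; combining this stability with the strong maximum principle on $\{a<0\}$ should forbid any nonzero ordered difference. Establishing that the sign-changing region cannot sustain a nonzero difference is, together with the positivity bound above, the crux of the theorem.
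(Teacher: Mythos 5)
Your route --- Hopf--Cole $w=e^{-u/(2\nu)}$ followed by constrained minimization of $E$ on $\{0\le w\le1\}$ --- is genuinely different from the paper's, which works directly on $u$: a strictly positive subsolution built from the principal eigenfunction (Lemma \ref{lemma:sub}), a monotone iteration $-\nu\Delta u_{n+1}+\frac12|\nabla_x u_{n+1}|^2+(K-a)u_{n+1}=Ku_n$, an $L^\infty$ a priori bound obtained by integrating the equation plus a normalization/bootstrap argument (Lemma \ref{lemma:apriori}), and a Laetsch scaling comparison for uniqueness (Lemma \ref{lemma:uniq}). Your transform and energy are correct, and reading the two hypotheses as the instability of the two trivial states $w\equiv1$ and $w\equiv0$ is a nice observation. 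But two steps are genuinely missing. For existence, showing $E(w_*)<\min(E(0),E(1))$ only excludes the constant competitors; a constrained minimizer solves the Euler--Lagrange equation only off the coincidence sets $\{w_*=0\}$ and $\{w_*=1\}$, so the whole theorem hinges on the strict bound $w_*>0$ on $\{a>0\}$ (equivalently the $L^\infty$ bound on $u$), which you defer. The sketch you offer does not close it: the ``uniform gradient bound from the first step'' controls $\nabla w$, not $\nabla u=-2\nu\nabla w/w$, so it does not prevent $w$ from vanishing, and since $s\mapsto s\ln s$ is not Lipschitz at $0$ even the strong maximum principle you invoke is at the Osgood borderline and needs justification. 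This is exactly where Lemma \ref{lemma:apriori} does the real work (and where $\min_\Omega a<0$ enters: the normalized blow-up limit there forces $aw\ge0$, contradicting the sign change of $a$).

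The uniqueness argument is the more serious gap. D\'iaz--Saa gives $0\le\int_\Omega a\,(\ln w_1-\ln w_2)(w_1^2-w_2^2)$, whose integrand carries the sign of $a$; to conclude one needs the integrand nonpositive a.e., i.e.\ $s\mapsto a(x)\ln s$ nonincreasing, which holds where $a<0$, \emph{not} where $a\ge0$ --- you have the favorable region backwards --- and in any case a single integral inequality cannot be localized region by region, so when $a$ changes sign it yields nothing on either set. The observation that $A_u u=\frac12|\nabla_x u|^2\ge0$ gives $\lambda_1(A_u)\ge0$ is correct, but ``combining this stability with the strong maximum principle should forbid any nonzero ordered difference'' is not an argument: marginal stability does not preclude multiplicity, and the difference of two solutions is not ordered a priori. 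The paper's Lemma \ref{lemma:uniq} closes this globally, with no splitting by the sign of $a$, by scaling $u_\theta=\theta u$ and using only the sub-homogeneity $\frac12|\nabla_x(\theta u)|^2\le\theta\cdot\frac12|\nabla_x u|^2$ for $\theta\in(0,1]$ together with the strong maximum principle applied to $v-u_\theta$; that is the step I would recommend you adopt, as it is the same convexity idea your D\'iaz--Saa attempt is reaching for, but applied where the structure is visible.
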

\begin{Rem}
Note that that the first assumption implies that $\max_\Omega a > 0$.
\end{Rem}
We separate the proof of Theorem \ref{thm:1} into several lemmas. The first one is concerned with the uniqueness part of the result and shall be crucial in the remaining of the paper.
\begin{Lemma}\label{lemma:uniq}
Assume that the function $a$ takes both positive and negative values. Consider $u>>0$ and $v>>0$ two smooth functions on $\Omega$. Assume that 
\[
\ba
-\nu\Delta u + \frac 12 |\nabla_x u|^2 \leq a(x)u \text{ in } \Omega,\\
-\nu\Delta v + \frac 12 |\nabla_x v|^2 \geq a(x)v \text{ in } \Omega.
\ea
\]
Then, $u \leq v$ and either $u = v$ or $u < v$.
\end{Lemma}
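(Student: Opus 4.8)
The plan is to remove the quadratic gradient term and reduce this quasilinear comparison to a linear, self-adjoint one. First I would set $z=u-v$ and subtract the two differential inequalities; using $|\nabla u|^2-|\nabla v|^2=\nabla(u+v)\cdot\nabla z$ this yields the \emph{linear} elliptic inequality
\[
-\nu\Delta z+\nabla\Phi\cdot\nabla z\le a\,z,\qquad \Phi:=\tfrac12(u+v),
\]
whose only first-order term is the gradient $\nabla\Phi$. Since the drift is a gradient, the operator symmetrises against the weight $e^{-\Phi/\nu}$, so the inequality becomes the divergence form $-\nu\,\mathrm{div}(e^{-\Phi/\nu}\nabla z)\le a\,e^{-\Phi/\nu}z$, with the homogeneous Neumann condition (which I read into the hypotheses, as in \eqref{kr}) removing all boundary terms. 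Equivalently, the Hopf--Cole substitution $\phi=e^{-u/2\nu}$, $\psi=e^{-v/2\nu}$, both valued in a compact subinterval of $(0,1)$, turns the statement into a comparison of a super-solution $\phi$ and a sub-solution $\psi$ of the semilinear equation $-\nu\Delta w=a\,w\ln w$, with $u\le v\iff\phi\ge\psi$.

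To prove $z\le 0$ I would try the two classical routes. The energy route tests the divergence-form inequality against $z^+=(u-v)^+$, giving $\nu\int e^{-\Phi/\nu}|\nabla z^+|^2\le\int a\,e^{-\Phi/\nu}(z^+)^2$, i.e. $z^+$ has nonpositive Rayleigh quotient for the weighted self-adjoint operator $-\nu\,\mathrm{div}(e^{-\Phi/\nu}\nabla\cdot)-a\,e^{-\Phi/\nu}$. The comparison route is the Picone/Brezis--Oswald manipulation on $\phi,\psi$: testing the super-solution with $(\psi^2-\phi^2)^+/\phi$ and the sub-solution with $(\psi^2-\phi^2)^+/\psi$ and invoking Picone's identity reduces everything to the sign of $\int_{\{\psi>\phi\}}a\,(\ln\phi-\ln\psi)(\psi^2-\phi^2)$.

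The hard part, and the whole point of the lemma, is that $a$ changes sign. At an interior maximum of $z$ the pointwise maximum principle only delivers $a\ge 0$ there, which is no contradiction; the Picone/Brezis--Oswald comparison needs $s\mapsto a\ln s$ monotone, which holds on $\{a<0\}$ but fails on $\{a>0\}$; and the energy route cannot simply invoke positivity of a principal eigenvalue, since for this weighted operator that eigenvalue may well be negative. In short, each standard mechanism works precisely on $\{a<0\}$ and breaks on $\{a>0\}$, so the argument must be genuinely nonlinear and global: I expect to have to couple the stabilising effect of the region $\{a<0\}$ (nonempty by hypothesis) with the equation on all of $\Omega$, propagating the inequality across $\{a=0\}$ by connectedness in the spirit of the nonlinear Krein--Rutman arguments of \citep{lions2021cours}, rather than through any purely local or purely linear estimate. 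This is exactly where the assumption that $a$ takes negative values must enter.

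Once $u\le v$ is established, the alternative is routine. Setting $\rho=v-u\ge 0$ and subtracting the inequalities as above shows $\rho$ is a nonnegative supersolution, $-\nu\Delta\rho+\nabla\Phi\cdot\nabla\rho-a\rho\ge 0$. Although the zeroth-order coefficient $-a$ has no sign, it can be absorbed using $\rho\ge0$, since $(-\nu\Delta+\nabla\Phi\cdot\nabla+|a|)\rho\ge(|a|-a)\rho\ge 0$ with $|a|\ge 0$; the strong maximum principle then forces $\rho\equiv 0$ as soon as $\rho$ vanishes at an interior point, while the Hopf boundary lemma together with the Neumann condition excludes a zero of $\rho$ on $\partial\Omega$. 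Hence either $u=v$ or $u<v$ throughout $\Omega$.
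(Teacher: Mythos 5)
Your proposal does not actually establish the main assertion $u\le v$: after forming the additive difference $z=u-v$ and its linearised inequality, you correctly observe that the energy, Picone and pointwise maximum-principle routes all break down on $\{a>0\}$, and then you stop at ``I expect to have to couple\ldots'' without supplying the global argument. That missing step is the entire content of the lemma. The paper's proof (following Laetsch) uses a \emph{multiplicative}, not additive, comparison: set $\theta=\max\{\tilde\theta\in[0,1]:\ \tilde\theta u\le v \text{ on }\Omega\}$, which is well defined and positive because $u,v>>0$ and $\Omega$ is bounded. Since the Hamiltonian is quadratic, $u_\theta=\theta u$ satisfies
\[
-\nu\Delta u_\theta+\tfrac12|\nabla_x u_\theta|^2+\left(\tfrac{1}{2\theta}-\tfrac12\right)|\nabla_x u_\theta|^2\le a(x)\,u_\theta ,
\]
i.e.\ it is a subsolution \emph{with a nonnegative slack term}. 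Then $w=v-u_\theta\ge0$ obeys the linear inequality $-\nu\Delta w+B\cdot\nabla_x w+(\lambda-a)w\ge\lambda w\ge0$ with $B=\tfrac12\nabla_x(v+u_\theta)$, and for $\lambda$ large the changing sign of $a$ is simply absorbed: the strong maximum principle gives either $w>0$ everywhere (which contradicts the maximality of $\theta$ unless $\theta=1$) or $w\equiv0$, in which case the slack term must vanish, forcing $\theta=1$ or $u$ constant; a positive constant subsolution would give $a\ge0$, and this is the \emph{only} place where the hypothesis that $a$ takes negative values enters --- there is no propagation across $\{a=0\}$ and no global coupling. The key idea you are missing is that the strict superhomogeneity $\tfrac12|\theta p|^2\le\theta\cdot\tfrac12|p|^2$ for $\theta\in(0,1]$ converts the sign problem on $a$ into one that a linear operator with a large zeroth-order shift handles trivially; your additive decomposition $z=u-v$ destroys exactly this structure.

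Your concluding paragraph (the dichotomy $u=v$ or $u<v$ once $u\le v$ is known) is essentially the same strong-maximum-principle step as in the paper and is fine; but as written the proposal has a genuine gap in the main inequality.
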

\begin{proof}
 The argument follows a technique from Laetsch \citep{laetsch1975uniqueness}. Since $\Omega$ is compact, we can consider $\theta \in (0,1]$ defined by 
 \[
 \theta = \max \{ \tilde \theta \in [0,1], \forall x \in \Omega, \tilde\theta u(x) \leq v(x)\}.
 \]
Observe that $u_\theta := \theta u$ is a solution of
\[
-\nu\Delta u_\theta + \frac {1}{2} |\nabla_x u_\theta|^2 + \left(\frac{1}{2\theta} - \frac 12\right) |\nabla_x u_\theta|^2\leq a(x)u_\theta \text{ in } \Omega.
\]
Hence, $w = v - u_\theta$ is a solution of
\[
-\nu\Delta w + \frac12\left( |\nabla_x v|^2 - |\nabla_x u_\theta|^2\right) \geq a(x)w \text{ in } \Omega.
\]
Define the vector field $B$ with
\[
B(x) = \frac{\left( |\nabla_x v|^2 - |\nabla_x u_\theta|^2\right)}{2(\nabla_x v - \nabla_x u_\theta)}.
\]
Clearly, $B$ is well defined and smooth. Observe now that both $w \geq 0$ and 
\be\label{eq1}
-\nu\Delta w + B\cdot \nabla_x w \geq a(x)w \text{ in } \Omega.
\ee
This implies that, for $\lambda \geq 0$
\be
-\nu\Delta w + B\cdot \nabla_x w + (\lambda - a(x))w\geq \lambda w \geq 0 \text{ in } \Omega.
\ee
Assume that $w \ne 0$. Then, choosing $\lambda > 0$ sufficiently large, by strong maximum principle, $\forall x \in \Omega, w(x) > 0$, which gives $\theta = 1$.

If $w = 0$, then $u_\theta$ solves the same inequality as $v$, since they are equal. This implies that $\left(\frac{1}{2\theta} - \frac 12\right) |\nabla_x u_\theta|^2= 0$ which either implies that $\theta = 1$ or that $u$ is constant, which is impossible by construction. Thus $\theta = 1$ and the result is proved. Note that if $u \ne v$, then we already saw that $\theta =1$ and $\forall x \in \Omega, w(x) > 0$. \end{proof}

We now turn to an a priori estimate which shall constitute a compactness argument in the existence part of Theorem \ref{thm:1}.
\begin{Lemma}\label{lemma:apriori}
Assume that the function $a$ takes both positive and negative values. Then, there exists a constant $C >0$ depending only on the function $a$ and $\Omega$ such that, for any smooth non-negative function $u$ satisfying
\[
\ba
-\nu \Delta u + \frac12|\nabla_x u|^2 \leq a(x) u \text{ in } \Omega,\\
\partial_n u = 0 \text{ on } \partial \Omega,
\ea
\]
we have $u \leq C$.
\end{Lemma}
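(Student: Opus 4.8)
The plan is to bound $u$ from above by comparing it, via the Laetsch-type principle already established in Lemma \ref{lemma:uniq}, with an \emph{explicit radial supersolution} anchored at a point where $a<0$. The finiteness of the bound should come entirely from the quadratic gradient term, exactly as in the inviscid (eikonal) heuristic $\tfrac12|\nabla u|^2\approx a_+u$, which forces $\sqrt u$ to grow at most like $\operatorname{diam}\Omega$ times $\sqrt{\|a_+\|_\infty/2}$. Before that, two preliminary observations are worth recording. Integrating the differential inequality over $\Omega$ and using the Neumann condition kills the Laplacian and yields $\tfrac12\int_\Omega|\nabla u|^2\le\int_\Omega au$; more useful is the pointwise remark that on the nonempty open set $\{a<0\}$ the right-hand side $au$ is $\le 0$, so $-\nu\Delta u+\tfrac12|\nabla u|^2\le 0$, i.e. $\Delta u\ge\tfrac1{2\nu}|\nabla u|^2\ge 0$. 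Thus $u$ is subharmonic there and in particular its global maximum must sit in $\{a\ge 0\}$: this is the structural fact that makes the negative region of $a$ act as an absorption zone.

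The core step is the barrier. Fix an interior point $y_0$ with $a(y_0)<0$ and a ball $B(y_0,r)\subset\{a\le-\delta\}$, and seek a supersolution of the form $\Phi(x)=\psi(|x-y_0|)$ with $\psi$ smooth, increasing, convex, and $\psi'(0)=0$. Writing $\rho=|x-y_0|$, the supersolution inequality reads $\tfrac12(\psi')^2-\nu\bigl(\psi''+\tfrac{d-1}{\rho}\psi'\bigr)\ge a\psi$. One arranges $\psi$ to have small curvature on $[0,r]$, so that, using $a\le-\delta$ there, the inequality holds inside $B(y_0,r)$ despite the vanishing of the gradient term, and to grow quadratically with leading coefficient larger than $\max_\Omega a$ on $[r,\operatorname{diam}\Omega]$, so that $\tfrac12(\psi')^2\ge a\psi$ dominates the (now bounded) viscous term on $\{a>0\}$. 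The outcome is a function $\Phi\gg 0$, bounded by a constant depending only on $a$ and $\Omega$ (and the fixed $\nu$), with $\partial_n\Phi\ge 0$ on $\partial\Omega$ for, e.g., convex $\Omega$.

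It then remains to compare. Since $u$ is a subsolution, $\Phi$ a supersolution, and $\Phi\gg 0$, the scaling argument of Lemma \ref{lemma:uniq} applies: the only adjustment is that $u$ is merely nonnegative, which is harmless since one rescales $u$ rather than $\Phi$, so the definition of $\theta=\max\{\tilde\theta:\tilde\theta u\le\Phi\}$ and the touching analysis are unaffected; the favourable sign $\partial_n\Phi\ge 0=\partial_n u$ forces, through the Hopf lemma, the touching point into the interior, where the strong maximum principle closes the argument. This gives $u\le\Phi\le C(a,\Omega)$.

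I expect the genuine difficulty to lie entirely in the barrier construction: reconciling the two regimes across the sign change of $a$ — the low-gradient region near $y_0$, which can only be controlled because $a<0$ there, with the growth region in $\{a>0\}$ and the viscous term — while respecting the Neumann geometry when $\Omega$ is not convex, where one would replace the Euclidean radius by a distance adapted to the degenerate metric $\sqrt{a_+/2}$. A secondary point to watch is the uniformity of the constant in $\nu$: the naive radial construction degrades when $\nu$ is large or when $\{a<0\}$ is small, so a bound genuinely independent of $\nu$ (as the statement claims) may require a sharper, non-radial barrier or, alternatively, a blow-up/contradiction argument whose limiting first-order (eikonal) problem admits no solution of the required height fitting inside $\Omega$.
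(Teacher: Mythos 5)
Your route is genuinely different from the paper's, and as written it has real gaps. The paper starts from exactly the integrated inequality you record and then set aside, $\int_\Omega|\nabla u|^2\le 2\max_\Omega a\int_\Omega u$: it first derives a uniform $L^1$ bound by contradiction (normalizing $w_n=u_n/\|u_n\|_{L^1}$, so that the quadratic term, now multiplied by $\|u_n\|_{L^1}$, forces $\int_\Omega|\nabla w_n|^2\to0$, hence $w_n$ tends to a positive constant $c$ and the limit inequality $0\le ac$ contradicts the sign change of $a$), deduces an $H^1$ bound, and then bootstraps to $L^\infty$ by comparing $u$ with the solution of the linear problem $-\nu\Delta v+Kv=(a+K)u$ and iterating Sobolev embeddings. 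That argument uses nothing about $\Omega$ beyond smoothness and boundedness, needs no barrier, and the sign change of $a$ enters only through the contradiction; the price is a non-explicit constant (which, incidentally, also depends on $\nu$, so the statement's phrase ``depending only on $a$ and $\Omega$'' is loose in the paper as well).

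The two points you flag as ``difficulties to watch'' are in fact the whole content of the lemma in your approach, and one is a genuine loss of generality. (i) The barrier is never constructed, and reconciling the two regimes is not a technicality: starting from $\psi'(0)=0$, the supersolution inequality inside $\{a\le-\delta\}$ imposes the Riccati-type constraint $\nu\psi''\lesssim\delta\psi+\tfrac12(\psi')^2$, so the slope can only build up at a rate governed by $\delta$, $\nu$ and the anchor value $\psi(0)$; to have $\tfrac12(\psi')^2$ dominate $a\psi+\nu\Delta\Phi$ by the time the shells reach $\{a>0\}$ you must take $\psi(0)$ large depending on $\nu$ and on the radius of a ball contained in $\{a\le-\delta\}$, i.e.\ on finer information about $a$ than its extrema. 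This can be carried out, but it \emph{is} the proof, not a remark, and you have not done it. (ii) The Neumann compatibility $\partial_n\Phi\ge0$ requires $(x-y_0)\cdot n(x)\ge0$ on $\partial\Omega$, i.e.\ $\Omega$ star-shaped with respect to a point of $\{a<0\}$; the lemma is stated for an arbitrary smooth bounded domain, and your proposed fix (a distance adapted to a degenerate metric) destroys the smoothness on which both your computation of $\Delta\Phi$ and the comparison of Lemma \ref{lemma:uniq} rely, via the cut locus. The comparison step itself (rescaling $u$ against $\Phi\gg0$, Hopf's lemma at the boundary) is sound, and note also that your eikonal heuristic only controls the oscillation of $\sqrt u$, not $u$ itself, so the anchoring in $\{a<0\}$ is doing all the work. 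In short: salvageable for star-shaped domains with substantial extra effort; for the general statement the paper's integral/compactness argument is the more robust route.
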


\begin{proof}
 Integrating the inequality and using the Neumann boundary condition leads to 
\be\label{ineq}
\int_\Omega |\nabla u|^2 \leq 2 \max_\Omega a \int_\Omega u.
\ee
We now claim that the previous, together with the inequality satisfied by $u$, imply a bound on $\|u\|_\infty$ which depends only $\Omega$ and $a$. We first a show a bound in $L^1(\Omega)$. Assume that there exists a sequence $(u_n)_{n \geq 0}$ as in the Lemma with $\| u_n\|_{L^1} \to \infty$ as $n \to \infty$. Observe that for all $n$, $w_n := u_n/\| u_n\|_{L^1}$ satisfies
\[
-\nu\Delta w_n + \| u_n\|_{L^1}\frac12 |\nabla_x w_n|^2 \leq a(x)w_n \text{ in } \Omega.
\]
Integrating this relation we obtain that $\int_\Omega |\nabla_x w_n|^2\to 0$ as $n \to \infty$. Furthermore, we know that $(w_n)_{n \geq 0}$ is bounded in $L^{1}(\Omega)$. Hence, it converges up to a subsequence toward a positive constant denoted by $c$, in $W^{1,1}(\Omega)$ for instance. From the normalization in $L^1(\Omega)$, this constant is $c =(\int_\Omega)^{-1}$. Hence the full sequence converges toward $c$. Thus, passing to the limit in the previous equation in the sense of distributions implies that the product $aw \geq 0$, which is a contradiction because we assumed that $a$ is not of constant sign.

Hence, the set of such $u$ is bounded in $L^1(\Omega)$, thus, thanks to \eqref{ineq}, it is bounded in $H^1(\Omega)$ since $\|u\|_{L^1} + \|\nabla u\|_{L^2}$ is an equivalent norm in $H^1(\Omega)$. 

Take $K > \max a$ and for any $u$ as in the Lemma, consider $v$ the solution of 
\[
\ba
-\nu \Delta v + K v = a(x) u + K u \text{ in } \Omega,\\
\partial_n v = 0 \text{ in } \partial \Omega.
\ea
\]
Since $u$ is bounded in $H^1(\Omega)$ by a constant $C(\nu,a)$, we deduce that $v$ is bounded in $H^3(\Omega)$ by a constant $C(\nu,a,K)$. Since $u$ is a subsolution of the PDE satisfied by $v$, we deduce that $u \leq v$. If $d < 6$, $v \in L^\infty(\Omega)$, if $d > 6$, $v \in L^{\frac{2n}{n-6}}$ and $v \in L^p(\Omega)$ for any $p< \infty$ if $d = 6$, with the fact that each time, the $L^p$ norm of $v$ is bounded by a constant $C(\nu,a,K)$. Hence, if $d < 6$ the result is proved. We have established that $u$ is bounded in say $L^{\frac{2n}{n-5}}(\Omega)$ by some constant $C(\nu,a,K)$ if $d \geq 6$. Repeating the same argument then yields the required result.

\end{proof}

We now provide the existence of (strictly) positive sub-solution of \eqref{kr}
\begin{Lemma}\label{lemma:sub}
Assume that $\lambda_1(-\nu \Delta - a)< 0$, then there exists a smooth $u_0>>0$ such that 
\[
\ba
-\nu\Delta u_0 + \frac 12 |\nabla_x u_0|^2 \leq a(x) u_0 \text{ in } \Omega,\\
\partial_n u_0 = 0 \text{ on } \partial \Omega.
\ea
\]
\end{Lemma}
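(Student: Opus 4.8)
The plan is to produce the subsolution explicitly, by scaling down the principal eigenfunction of the linear operator $-\nu\Delta - a$. As recalled just before the statement, since this operator is of the form $-\nu\Delta - a$ with $a$ smooth, its first Neumann eigenvalue $\lambda_1 := \lambda_1(-\nu\Delta-a)$ is well defined and carries a smooth, strictly positive eigenfunction $\phi >> 0$, which I normalize so that $m := \min_\Omega \phi > 0$ and which solves
\[
-\nu\Delta \phi = (a + \lambda_1)\phi \text{ in } \Omega, \qquad \partial_n \phi = 0 \text{ on } \partial\Omega.
\]
My candidate is $u_0 := \epsilon\,\phi$ for a small parameter $\epsilon > 0$ to be fixed. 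For every $\epsilon$ we have $u_0 >> 0$, $u_0$ is smooth, and $\partial_n u_0 = \epsilon\,\partial_n\phi = 0$, so the positivity, regularity and boundary condition are automatic; it only remains to choose $\epsilon$ so that the differential inequality holds.

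\textbf{Key computation.} Inserting $u_0 = \epsilon\phi$ into the left-hand side and using the eigenfunction equation, I would compute
\[
-\nu\Delta u_0 + \frac12|\nabla_x u_0|^2 - a\,u_0 = \epsilon(a+\lambda_1)\phi + \frac12\epsilon^2|\nabla_x\phi|^2 - \epsilon a\phi = \epsilon\,\lambda_1\phi + \frac12\epsilon^2|\nabla_x\phi|^2 .
\]
The whole point is the mismatched $\epsilon$-scaling of the two surviving terms. Since $\lambda_1 < 0$ and $\phi \geq m > 0$, the first term is negative and bounded away from zero, $\lambda_1\phi \leq \lambda_1 m < 0$, and it is of order $\epsilon$; the quadratic term is nonnegative of order $\epsilon^2$ and controlled by $\tfrac12\epsilon^2\|\nabla_x\phi\|_\infty^2$. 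Hence the bracket is $\leq \epsilon\big(\lambda_1 m + \tfrac12\epsilon\|\nabla_x\phi\|_\infty^2\big)$, which is $\leq 0$ as soon as $\epsilon \leq 2|\lambda_1| m / (1+\|\nabla_x\phi\|_\infty^2)$. Fixing such an $\epsilon$ yields $-\nu\Delta u_0 + \tfrac12|\nabla_x u_0|^2 \leq a\,u_0$ pointwise, which is exactly the desired subsolution inequality.

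\textbf{Main difficulty.} There is really no serious obstacle here: the lemma is a soft, one-parameter construction, and the content is entirely the observation that the harmful quadratic gradient term is of higher order in $\epsilon$ than the favorable linear term produced by the strict negativity of $\lambda_1$. The only points requiring a word of care are the smoothness and strict positivity of $\phi$ (which are supplied by the Krein--Rutman-type statement quoted in the excerpt, together with standard elliptic regularity for the smooth operator $-\nu\Delta - a$), and the degenerate possibility $\nabla_x\phi\equiv 0$; in that case $\phi$ is constant and the inequality reduces to $0 \leq \epsilon a\phi$ on the set where $a\geq 0$, while the estimate $\lambda_1\phi \leq \lambda_1 m < 0$ already handles the inequality everywhere, so any $\epsilon>0$ works. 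Thus $u_0 = \epsilon\phi$ is the required strictly positive subsolution.
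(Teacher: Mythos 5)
Your construction is correct and is essentially identical to the paper's own proof: both take the positive principal eigenfunction of $-\nu\Delta - a$ and rescale it by a small constant so that the quadratic gradient term, which is of order $\epsilon^2$, is absorbed by the order-$\epsilon$ term $\lambda_1 u_0 \leq \lambda_1 m \epsilon < 0$ coming from the strict negativity of the first eigenvalue. The paper phrases the smallness condition as $\frac12|\nabla_x u_0|^2 \leq \lambda u_0$ with $\lambda = -\lambda_1$, which is exactly the inequality you verify explicitly.
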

\begin{proof}
Consider $-\lambda := \lambda_1(-\nu \Delta - a)$ and $u_0$ an associated positive eigenvector. Clearly, since $u_0$ is smooth, choosing $\alpha u_0$ instead of $u_0$ for $\alpha> 0$ sufficiently small yields that $\frac12|\nabla_xu_0|^2 \leq \lambda u_0$. Hence, since $-\nu \Delta u_0 + \lambda u_0 = a(x) u_0$, the result follows.
\end{proof}

We can now state the proof of our main result.
\begin{proof}[Proof of Theorem \ref{thm:1}]
The uniqueness part is an immediate consequence of Lemma \ref{lemma:uniq}. Take $u_0$ given by Lemma \ref{lemma:sub}, a constant $K> 0$ such that $K > \max_\Omega a$ and consider now the sequence $(u_n)_{n \geq 0}$ defined by 
\[
\ba
-\nu\Delta u_{n+1} + \frac12|\nabla_x u_{n+1}|^2 + (K-a(x))u_{n+1} = Ku_n \text{ in } \Omega,\\
\partial_n u_{n+1} = 0 \text{ on } \partial \Omega.
\ea
\]
This sequence is well defined. We now prove that it is non-decreasing. Indeed, remark that $-\nu \Delta u_1 + \frac 12 |\nabla_x u_1|^2 + (K-a(x))u_1 = Ku_0$ and that $-\nu \Delta u_0 + \frac 12 |\nabla_x u_0|^2 + (K-a(x))u_0 \leq Ku_0$. Hence, by standard comparison principle, we obtain that $u_1 \geq u_0$. This implies that $u_1$ is also a sub-solution of the equation. Hence, arguing by induction, we obtain $(u_n)$ is a non-decreasing sequence of sub-solutions of the problem. Hence, from Lemma \ref{lemma:apriori}, we deduce that the sequence is bounded. Hence, it converges toward some limit $u$. By standard arguments of regularity, we obtain that $(u_n)$ is bounded in some functional space with high regularity, and thus $u$ is indeed a solution of the problem, namely by passing in the limit in the PDE satisfied by $u_{n+1}$.
\end{proof}
\begin{Rem}
The previous proof yields a natural numerical scheme to compute the solution of \eqref{kr}.
\end{Rem}

%
%

\subsection{On the necessity of the assumptions of the main theorem}
We highlight the fact that the assumptions of Theorem \ref{thm:1} are also necessary in some sense. Indeed, we have the following result.
\begin{Prop}
Consider a smooth solution $u \geq 0$ of 
\[
\ba
-\nu \Delta u + \frac 12 |\nabla_x u|^2 = a(x) u \text{ in } \Omega,\\
\partial_n u = 0 \text{ in } \partial \Omega.
\ea
\]
If $a \ne 0$ and $\lambda_1(-\nu\Delta -a ) \geq 0$, then $u = 0$.
\end{Prop}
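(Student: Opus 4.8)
The plan is to exploit the principal eigenfunction of the linear operator $-\nu\Delta - a$ as a weight, and to pair it against the nonlinear equation in two different ways so that the linear coupling $a u$ cancels, leaving only manifestly signed quantities. Let $\phi >> 0$ denote a positive eigenfunction associated with $\lambda_1 := \lambda_1(-\nu\Delta - a) \geq 0$, which exists and is smooth as recalled above; it satisfies $-\nu\Delta\phi = (a + \lambda_1)\phi$ in $\Omega$ with $\partial_n\phi = 0$.

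First I would test the eigenfunction equation against $u$ and the equation for $u$ against $\phi$, integrating by parts in each case. Since both $u$ and $\phi$ satisfy homogeneous Neumann conditions, all boundary terms vanish, and I obtain the two identities
\[
\nu\int_\Omega \nabla_x\phi\cdot\nabla_x u = \int_\Omega (a+\lambda_1)\phi\, u,
\]
\[
\nu\int_\Omega \nabla_x u\cdot\nabla_x\phi + \frac12\int_\Omega \phi\,|\nabla_x u|^2 = \int_\Omega a\,u\,\phi.
\]
Subtracting the first relation from the second makes the terms $\int_\Omega a\phi u$ and $\nu\int_\Omega \nabla_x u\cdot\nabla_x\phi$ disappear simultaneously, yielding the single clean relation
\[
\lambda_1\int_\Omega \phi\,u + \frac12\int_\Omega \phi\,|\nabla_x u|^2 = 0.
\]

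From here the conclusion is essentially forced. Under the hypotheses $\lambda_1 \geq 0$, $\phi >> 0$ and $u \geq 0$, both terms on the left-hand side are nonnegative, hence each must vanish. The vanishing of $\frac12\int_\Omega \phi\,|\nabla_x u|^2$, together with $\phi$ being bounded below by a positive constant, forces $\nabla_x u \equiv 0$, i.e. $u$ is a constant $c \geq 0$. Substituting a constant into the equation gives $0 = a(x)\,c$ for every $x$; since $a \not\equiv 0$ there is a point where $a$ does not vanish, whence $c = 0$ and $u \equiv 0$. One may note that when $\lambda_1 > 0$ the first term alone already yields $\int_\Omega \phi u = 0$ and hence $u \equiv 0$, the gradient term being what covers the degenerate case $\lambda_1 = 0$.

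The only genuine idea here is the choice of the weighted two-sided testing that cancels the linear term $au$; once that pairing is set up, the remainder is routine integration by parts and a sign inspection, so I do not anticipate a serious obstacle. The single point deserving a word of care is the regularity required to justify the integrations by parts, but this is immediate since $u$ is assumed smooth and the eigenfunction $\phi$ is smooth as well.
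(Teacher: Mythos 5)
Your proof is correct, and it takes a genuinely different route from the paper's. You test the eigenfunction equation against $u$ and the equation for $u$ against $\phi$, integrate by parts (all boundary terms vanish under the two Neumann conditions), and subtract, which cancels both $\int_\Omega a\,\phi\,u$ and the cross term $\nu\int_\Omega \nabla_x u\cdot\nabla_x\phi$ and leaves $\lambda_1\int_\Omega \phi\,u + \frac12\int_\Omega\phi\,|\nabla_x u|^2 = 0$; the sign inspection and the constancy argument that follow are airtight (granting, as is standard, that $\Omega$ is connected). The paper instead runs a pointwise sliding argument: it sets $\alpha = \min\{\beta \geq 0 : u \leq \beta\phi\}$, applies the strong maximum principle to $w = u - \alpha\phi$ to conclude $w = 0$, deduces that $u$ is itself a principal eigenfunction, and then reads off $\frac12|\nabla_x u|^2 = -\lambda_1 u$ to force $u$ constant. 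Your variational argument is shorter and more elementary, avoiding the strong maximum principle entirely; its one structural limitation is that the cancellation exploits the self-adjointness of $-\nu\Delta - a$, so in the extension the paper mentions where a drift $-b\cdot\nabla_x$ is present one would have to pair against the adjoint principal eigenfunction instead, whereas the paper's maximum-principle argument carries over verbatim and is of a piece with the comparison technique used in its uniqueness lemma.
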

\begin{proof}
Denote $\lambda := \lambda_1(-\nu\Delta -a ) \geq 0$, and consider $\phi >> 0$ an eigenfunction of $-\nu \Delta - a$ associated to $\lambda$. Define $\alpha = \min \{ \beta \geq 0 | u \leq \beta \phi\}$ and $w = u - \alpha \phi$. By construction, $w \leq 0$. Furthermore,
\[
-\nu \Delta w  - \lambda w = a(x) w -\frac 12 |\nabla_xu|^2 -\lambda u \text{ in } \Omega.
\]
Hence, for any $K \geq 0$, we obtain that $w \leq 0$ and
\[
-\nu \Delta w - \lambda w -a(x)w + Kw \leq K w \leq 0 \text{ in } \Omega.
\]
Hence, by strong maximum principle, we obtain that $w = 0$, because $-w >>0$ is not possible since by construction, $w(x) = 0$ has a solution $x \in \Omega$. Hence $u = \alpha \phi$. Hence $u$ is also an eigenfunction of $-\nu \Delta -a$ and thus we deduce that $\frac12|\nabla_x u|^2 = -\lambda u$. Thus, $u$ is constant, but since $a \ne 0$, we find that $u = 0$. 
\end{proof}
\begin{Prop}
If $a \geq 0$ and there exists a solution $u$ to \eqref{kr}, then $a \equiv 0$.
\end{Prop}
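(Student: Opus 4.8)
The plan is to use an exponential (Hopf--Cole type) change of unknown. Since the pure quadratic Hamiltonian $\frac12|\nabla_x u|^2$ combined with the viscosity $-\nu\Delta u$ is exactly the structure tamed by $\psi := e^{-u/(2\nu)}$, I would first set $\psi = e^{-u/(2\nu)}$, which is well defined, smooth and strictly positive because $u$ is smooth and the positivity constraint in \eqref{kr} forces $u>0$. A direct computation gives $\nabla_x\psi = -\tfrac{1}{2\nu}\psi\,\nabla_x u$ and
\[
\Delta\psi = \frac{\psi}{2\nu}\Big(\frac{1}{2\nu}|\nabla_x u|^2 - \Delta u\Big),
\]
and substituting the value of $\tfrac{1}{2\nu}|\nabla_x u|^2-\Delta u$ read off from \eqref{kr} (after dividing the equation by $\nu$) yields the clean identity
\[
\Delta\psi = \frac{a(x)\,u}{2\nu^2}\,\psi \quad\text{in }\Omega .
\]

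The key observation is that, although this change of variables does not fully linearise the problem (the surviving zeroth order coefficient $a u = -2\nu\, a\log\psi$ still depends on $\psi$), I do not need a linear equation: I only need the \emph{sign} of the right-hand side. Since $a\geq 0$, $u>0$ and $\psi>0$, the identity above shows $\Delta\psi\geq 0$, i.e. $\psi$ is subharmonic on $\Omega$. Moreover the Neumann condition $\partial_n u=0$ is inherited by $\psi$, since $\partial_n\psi = -\tfrac{1}{2\nu}\psi\,\partial_n u = 0$ on $\partial\Omega$.

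It then suffices to integrate. By the divergence theorem and the Neumann boundary condition,
\[
\int_\Omega \Delta\psi\,dx = \int_{\partial\Omega}\partial_n\psi\,dS = 0 .
\]
Because the (continuous) integrand $\Delta\psi$ is non-negative and has zero integral, it must vanish identically, so $\Delta\psi\equiv 0$ and hence $a(x)\,u(x)\equiv 0$ in $\Omega$. As $u>0$ everywhere in $\Omega$, this forces $a\equiv 0$, which is the claim.

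The main obstacle---or rather the one point requiring care---is recognising that the Hopf--Cole substitution is still useful here even though it fails to linearise the equation: the leftover nonlinear zeroth order term is harmless precisely because it keeps a definite sign under the hypothesis $a\ge 0$. Note also that one cannot simply invoke the previous proposition, since when $a\ge 0$ and $a\not\equiv 0$ the constant test function gives $\lambda_1(-\nu\Delta-a)\le -|\Omega|^{-1}\int_\Omega a<0$, so its hypothesis $\lambda_1\ge 0$ is never met; a dedicated argument such as the one above is genuinely needed.
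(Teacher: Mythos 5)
Your proof is correct and is essentially the argument the paper has in mind: the paper's one-line proof asserts that $-\nu\Delta u+\tfrac12|\nabla_x u|^2\ge 0$ together with the Neumann condition forces $u$ to be constant, and your Hopf--Cole substitution $\psi=e^{-u/(2\nu)}$ followed by integration of $\Delta\psi\ge 0$ over $\Omega$ is precisely the computation that justifies that claim. The only difference is cosmetic: you read off $au\equiv 0$ directly from $\Delta\psi\equiv 0$ rather than first concluding that $u$ is constant, and you supply the details the paper omits (including the apt remark that the preceding proposition cannot be invoked, since $a\ge 0$, $a\not\equiv 0$ makes $\lambda_1(-\nu\Delta-a)<0$).
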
 
\begin{proof}
In this case, we have $$-\nu \Delta u + \frac 12 |\nabla u|^2 \geq 0$$ which implies that $u$ is constant. Hence, the only solution is $a =0$.
\end{proof}

\subsection{The case of a source term}
We here explain how to adapt the results of the previous section to the case of
\be\label{krf}
\ba
-\nu\Delta u + \frac 12 |\nabla_x u|^2 = a(x)u + f \text{ in } \Omega,\\
u>> 0,\\
\partial_n u = 0 \text{ on } \partial \Omega,
\ea
\ee
with the same assumption as above, but now $f \geq 0$ is a smooth function. We have the following result.
\begin{Theorem}\label{thm:2}
Under the assumptions of Theorem \ref{thm:1}, there exists a unique smooth solution of \eqref{krf}.
\end{Theorem}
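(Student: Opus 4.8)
The plan is to reproduce, nearly verbatim, the four-step structure used for Theorem \ref{thm:1}—uniqueness through the Laetsch-type comparison of Lemma \ref{lemma:uniq}, the a priori $L^\infty$ bound of Lemma \ref{lemma:apriori}, the positive subsolution of Lemma \ref{lemma:sub}, and the monotone iteration—and to verify at each step that the nonnegative source $f$ enters the relevant inequalities with a favorable sign. This sign bookkeeping is the whole content of the adaptation.

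For uniqueness I would re-run Lemma \ref{lemma:uniq} with $u$ a subsolution and $v$ a supersolution of \eqref{krf}. Setting $u_\theta = \theta u$ with $\theta = \max\{\tilde\theta \in [0,1] : \tilde\theta u \leq v\} \in (0,1]$, a direct computation gives
\[
-\nu\Delta u_\theta + \frac12|\nabla_x u_\theta|^2 \leq a(x)u_\theta + \theta f - \frac{\theta(1-\theta)}{2}|\nabla_x u|^2 \text{ in } \Omega,
\]
so that $w = v - u_\theta \geq 0$ satisfies
\[
-\nu\Delta w + \frac12\left(|\nabla_x v|^2 - |\nabla_x u_\theta|^2\right) \geq a(x)w + (1-\theta)f + \frac{\theta(1-\theta)}{2}|\nabla_x u|^2 \geq a(x)w,
\]
since $\theta \leq 1$ and $f \geq 0$. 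This is exactly inequality \eqref{eq1} once the same vector field $B$ is introduced, so the strong maximum principle forces $\theta = 1$ whenever $w \not\equiv 0$. The degenerate case $w \equiv 0$ with $\theta < 1$ would impose $(1-\theta)f \equiv 0$ and $|\nabla_x u| \equiv 0$, hence $u$ constant and $f \equiv 0$, which contradicts $\min_\Omega a < 0$ exactly as before. Thus $u \leq v$, giving uniqueness.

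For the a priori estimate I would note that integrating the subsolution inequality now yields $\int_\Omega |\nabla u|^2 \leq 2\max_\Omega a \int_\Omega u + 2\int_\Omega f$, and that in the rescaling $w_n = u_n/\|u_n\|_{L^1}$ the new term is $f/\|u_n\|_{L^1} \to 0$; since the quadratic term keeps the right sign, passing to the distributional limit again produces $a c \geq 0$ for the positive constant $c$, the very contradiction of Lemma \ref{lemma:apriori}. The subsequent linear bootstrap is untouched because $f$ is a fixed smooth function, bounded in every $L^p(\Omega)$, so it merely shifts the right-hand side of the comparison problem for $v$ by a controlled amount; hence $u \leq C$ survives.

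Finally, the existence scheme carries over with no structural change. The subsolution $u_0$ from Lemma \ref{lemma:sub} satisfies $-\nu\Delta u_0 + \frac12|\nabla_x u_0|^2 \leq a u_0 \leq a u_0 + f$, so it is also a subsolution of \eqref{krf}, and I would define $(u_n)$ by $-\nu\Delta u_{n+1} + \frac12|\nabla_x u_{n+1}|^2 + (K-a)u_{n+1} = K u_n + f$ with $K > \max_\Omega a$. The inequality $f \geq 0$ yields $u_1 \geq u_0$ by comparison, monotonicity follows by induction, and because the sequence is non-decreasing each $u_{n+1}$ is again a subsolution of \eqref{krf}; the adapted a priori bound then applies, and the monotone limit $u \geq u_0 >> 0$ solves \eqref{krf} after the usual regularity bootstrap. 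I expect the only genuinely delicate point to be the comparison step, namely checking that the scaling argument still closes—which it does precisely because both $f$ and the leftover gradient term appear with nonnegative coefficients once $\theta \leq 1$.
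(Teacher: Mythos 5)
Your proposal is correct and follows essentially the same route as the paper, which likewise adapts Lemmas \ref{lemma:uniq} and \ref{lemma:apriori} by observing that $\theta f \leq f$ for $\theta \in (0,1]$ and that the integrated estimate picks up an extra $\int_\Omega f$ term, then runs the same monotone iteration. Your version merely spells out the sign bookkeeping that the paper leaves implicit.
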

\begin{proof}
We only insist briefly on the main differences with the previous case. The existence part is mostly left unchanged, as the existence also follows on the same type of a priori estimate, except that now \eqref{ineq} is replaced by
\[
\int_\Omega |\nabla u|^2 \leq 2 \max_\Omega a \int_\Omega u + \int_\Omega f.
\]
On the other hand, uniqueness is obtained by adapting straightforwardly Lemma \ref{lemma:uniq} and realizing that if $u$ is a smooth solution of
\[
-\nu \Delta + \frac12|\nabla_x u|^2 \leq a(x) u + f \text{ in } \Omega,
\]
then $u_\theta = \theta u$ is also a solution of the same inequality because $\theta f \leq f$ because $f \geq 0$.
\end{proof}
As an immediate consequence of Lemma \ref{lemma:comp}, we obtain also the following.
\begin{Cor}
If we denote by $u(f)$ the solution of \eqref{krf} for a smooth $f \geq 0$, then $u$ is increasing with respect to $f$.
\end{Cor}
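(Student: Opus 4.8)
The plan is to deduce the statement directly from the comparison principle that underlies uniqueness, i.e. the adaptation of Lemma~\ref{lemma:uniq} to the equation with source used in the proof of Theorem~\ref{thm:2}. Fix two smooth sources $0 \leq f_1 \leq f_2$ and write $u_1 := u(f_1)$ and $u_2 := u(f_2)$ for the corresponding solutions of \eqref{krf}, both of which exist, are unique, and satisfy $u_i >> 0$ by Theorem~\ref{thm:2}.

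First I would observe that $u_1$ is a sub-solution of the problem with the larger source $f_2$. Indeed, since $u_1$ solves \eqref{krf} with source $f_1$ and $f_1 \leq f_2$,
\[
-\nu\Delta u_1 + \frac12|\nabla_x u_1|^2 = a(x) u_1 + f_1 \leq a(x) u_1 + f_2 \text{ in } \Omega,
\]
while $u_2$, being an exact solution, is in particular a super-solution of the same problem; both functions are strictly positive and satisfy the homogeneous Neumann condition.

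The key step is then to invoke the comparison lemma for \eqref{krf} (the version of Lemma~\ref{lemma:uniq} adapted to a common source term, as in the uniqueness part of Theorem~\ref{thm:2}): a sub-solution and a super-solution of \eqref{krf} sharing the same source, both $>>0$, are necessarily ordered. Applying it with common source $f_2$ yields $u_1 \leq u_2$, which is the announced monotonicity. Moreover, the dichotomy in Lemma~\ref{lemma:uniq} gives that either $u_1 = u_2$ or $u_1 < u_2$; in the first case $u_1$ would solve \eqref{krf} with both sources, forcing $a(x)u_1 + f_1 = a(x)u_1 + f_2$ and hence $f_1 = f_2$. Thus if $f_1 \not\equiv f_2$ the inequality is strict, giving strict monotonicity of $f \mapsto u(f)$.

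There is essentially no obstacle here beyond carrying the dilation argument of Lemma~\ref{lemma:uniq} through the source term. The only point requiring care, already recorded in the proof of Theorem~\ref{thm:2}, is that the rescaling $u_\theta = \theta u_1$ with $\theta \in (0,1]$ remains a sub-solution of the $f_2$-problem, which uses $\theta f_2 \leq f_2$ precisely because $f_2 \geq 0$; the source then cancels when one subtracts the inequalities for $v$ and $u_\theta$, so the vector-field and strong-maximum-principle argument proceeds verbatim.
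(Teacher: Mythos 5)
Your argument is correct and is exactly the route the paper takes: the Corollary is stated there as an immediate consequence of the comparison lemma (Lemma~\ref{lemma:uniq} adapted to the source term as in the proof of Theorem~\ref{thm:2}), which is precisely what you invoke after observing that $u(f_1)$ is a sub-solution of the $f_2$-problem. Your additional remark on strictness is a harmless elaboration beyond what the paper records.
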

\begin{Rem}
We insist upon the fact that the assumption $f\geq 0$ is here crucial to extend Lemma \ref{lemma:uniq}. 
\end{Rem}

\subsection{Optimal control interpretation}\label{sec:optcontr}

Consider an atomeless filtered probabilistic space $(\Omega, \mathcal{A},(\mathcal{F}_t)_{t\geq 0}, \mathbb{P})$ and a Brownian motion $(W_t)_{t \geq 0}$ upon it. We want to give a stochastic optimal control interpretation of the solution of \eqref{kr}. Of course, the natural associated stochastic optimal control problem is given, for an initial condition $x \in \Omega$ by 
\be\label{opt}
\inf_{(\alpha_t)_{t \geq 0}}\frac12\mathbb{E}\left[ \int_0^{+\infty}e^{\int_0^ta(X^{x,\alpha}_s)ds}|\alpha_t|^2dt\right],
\ee
where the infimum is taken over square integrable admissible processes and the process $(X^{x,\alpha}_t)_{t \geq 0}$ is given by
\be\label{sde}
X^{x,\alpha}_t = x + \int_0^t\alpha_s ds + \sqrt{2\nu}W_t + k_t,
\ee
where $(k_t)_{ t \geq 0}$ is the standard process associated to the normal reflexion on the boundary of $\Omega$, see Lions and Sznitman \citep{lionssznitman}. Of course this problem is trivially solved by taking $\alpha_t = 0$ for all $t \geq 0$. This corresponds to the solution $0$ of the PDE in \eqref{kr}. To develop a more interesting analysis and recover the solution $u$ of \eqref{kr} given by Theorem \ref{thm:1}, we introduce a perturbed version of the previous problem. Namely, we take $\eps > 0$ and we are interested in 
\[
v_\eps (x) := \inf_{(\alpha_t)_{t \geq 0}}\mathbb{E}\left[ \int_0^{+\infty}e^{\int_0^ta(X^{x,\alpha}_s)ds}\left(\frac12|\alpha_t|^2 + \eps\right)dt\right],
\]
where $(X^{x,\alpha}_t )_{t \geq 0}$ is still given as the solution of \eqref{sde} and the infimum is still taken over admissible squared integrable process.
We want to establish the following result.
\begin{Prop}\label{prop:contr}
\[
\lim_{\eps \to 0^+}\|v_\eps -u\|_\infty = 0. 
\]
\end{Prop}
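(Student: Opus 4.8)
The plan is to first identify $v_\eps$ with the unique solution $w_\eps$ of the stationary problem \eqref{krf} with constant source $f=\eps$, whose existence and uniqueness are granted by Theorem~\ref{thm:2}, and then to let $\eps\to 0^+$ using the a priori bounds and the uniqueness already established for \eqref{kr}. The Hamilton--Jacobi--Bellman equation attached to the control problem defining $v_\eps$ comes from the dynamic programming principle: writing the generator of \eqref{sde} as $\mathcal L^\alpha=\nu\Delta+\alpha\cdot\nabla$ and the multiplicative discount $e^{\int_0^t a\,ds}$, the infinitesimal form of the principle reads $\inf_{\alpha}\{\tfrac12|\alpha|^2+\eps+a(x)w_\eps+\nu\Delta w_\eps+\alpha\cdot\nabla w_\eps\}=0$. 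Minimizing the quadratic in $\alpha$ gives the optimal feedback $\alpha^\star=-\nabla w_\eps$ with value $-\tfrac12|\nabla w_\eps|^2$, so $w_\eps$ solves exactly $-\nu\Delta w_\eps+\tfrac12|\nabla w_\eps|^2=a(x)w_\eps+\eps$ with $\partial_n w_\eps=0$, i.e. \eqref{krf} with $f=\eps$.

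The core (and most delicate) step is the verification that $v_\eps=w_\eps$. Taking the smooth solution $w_\eps>>0$ of Theorem~\ref{thm:2}, I would apply It\^o's formula to $t\mapsto e^{\int_0^t a(X_s)\,ds}\,w_\eps(X_t)$ along any admissible trajectory $X=X^{x,\alpha}$ of \eqref{sde}. The boundary local time $dk_t$ pairs with $\partial_n w_\eps=0$ and drops out, while the drift equals $e^{\int_0^t a}(\tfrac12|\nabla w_\eps|^2+\alpha_t\cdot\nabla w_\eps-\eps)\ge e^{\int_0^t a}(-\tfrac12|\alpha_t|^2-\eps)$ by completing the square, with equality precisely for $\alpha=-\nabla w_\eps$. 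After localization to make the stochastic integral a true martingale, taking expectations on $[0,T]$ and using Fatou yields, for every admissible control, $w_\eps(x)\le \mathbb E[\int_0^T e^{\int_0^t a}(\tfrac12|\alpha_t|^2+\eps)\,dt]+\mathbb E[e^{\int_0^T a}w_\eps(X_T)]$. The transversality term is controlled as follows: since $w_\eps$ is bounded it suffices to show $\liminf_{T}\mathbb E[e^{\int_0^T a}]=0$ for controls of finite cost, which holds because finiteness of the cost forces $\int_0^\infty \mathbb E[e^{\int_0^t a}]\,dt<\infty$ (the integrand dominates $\eps\,e^{\int_0^t a}$). Passing to the limit along such $T_n$ gives $w_\eps\le v_\eps$; running the same computation with the feedback $\alpha^\star=-\nabla w_\eps$, which is Lipschitz, hence generates a unique reflected diffusion by Lions--Sznitman \citep{lionssznitman} and has finite cost bounded by $w_\eps(x)$, turns the inequality into an equality and gives $v_\eps\le w_\eps$, whence $v_\eps=w_\eps$.

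It then remains to pass to the limit $\eps\to 0^+$. By the monotonicity corollary following Theorem~\ref{thm:2}, the family $\eps\mapsto w_\eps=u(\eps)$ is nondecreasing, so $v_\eps\downarrow$ as $\eps\downarrow 0$ while staying above $u=u(0)$. The a priori estimate of Lemma~\ref{lemma:apriori}, in the form used for Theorem~\ref{thm:2} (namely $\int_\Omega|\nabla w_\eps|^2\le 2\max_\Omega a\int_\Omega w_\eps+\eps|\Omega|$), is uniform for $\eps\in(0,1]$ and yields a bound $w_\eps\le C$ independent of $\eps$; the $L^1$ contradiction argument carries over verbatim since $\eps/\|w_\eps\|_{L^1}\to 0$. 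Elliptic regularity then bounds $w_\eps$ uniformly in, say, $C^{2,\beta}(\overline\Omega)$, so any sequence $\eps_n\to 0$ has a subsequence converging in $C^2$ to some $\bar u$ solving the PDE and Neumann condition of \eqref{kr}. Since $\bar u\ge u>0$, the limit is admissible for the uniqueness statement, and Theorem~\ref{thm:1} forces $\bar u=u$; as the limit is independent of the subsequence, the whole family converges. Uniform convergence finally follows from the uniform $C^2$ bounds, or more directly from Dini's theorem applied to the monotone convergence $w_{\eps_n}\downarrow u$ along any sequence $\eps_n\downarrow0$ on the compact $\overline\Omega$ with continuous limit.

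The main obstacle is the verification step, precisely because the discount $e^{\int_0^t a\,ds}$ is multiplicative with $a$ of changing sign: unlike the classical case of a positive discount rate bounded below, no exponential decay is available, and the transversality term must be controlled through finiteness of the cost and, ultimately, through the spectral assumption $\lambda_1(-\nu\Delta-a)<0$ underlying the existence of a bounded positive $w_\eps$. A secondary point requiring care is to ensure the $\eps\to 0$ limit is strictly positive, so that the uniqueness of Theorem~\ref{thm:1} (valid only for $u>>0$) applies; this is exactly what the lower bound $v_\eps\ge u>>0$ secures.
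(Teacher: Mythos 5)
Your proof is correct, and its overall architecture is the same as the paper's: identify $v_\eps$ with the unique solution $u_\eps$ of \eqref{krf} with $f=\eps$ given by Theorem \ref{thm:2}, then use monotonicity in $\eps$ (from Lemma \ref{lemma:uniq}), the uniform a priori bound, elliptic regularity and uniqueness for \eqref{kr} to pass to the limit. Where you genuinely diverge is in the identification step $v_\eps = u_\eps$, which is the delicate part. The paper proves only the one-sided bound $v_\eps \le u_\eps$ by playing the feedback control $-\nabla_x u_\eps$, and then obtains the reverse inequality indirectly: it establishes that $v_\eps$ is Lipschitz (citing the techniques of Fleming and Soner, with a footnote conceding the cited results do not apply verbatim), invokes the dynamic programming principle to show $v_\eps$ is itself a smooth solution of \eqref{eq3}, and concludes by uniqueness. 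You instead run a classical two-sided verification argument: It\^o's formula applied to $e^{\int_0^t a}\,u_\eps(X_t)$, completion of the square in $\alpha$, and a transversality estimate. Your route is more self-contained, since it only uses the smoothness of $u_\eps$ (already guaranteed by Theorem \ref{thm:2}) and never needs any a priori regularity of the value function or the DPP; your handling of the transversality term is also a nice touch, exploiting the $\eps$-term in the running cost to force $\int_0^\infty \mathbb{E}\bigl[e^{\int_0^t a}\bigr]dt < \infty$ for any finite-cost control, whereas the paper gets the analogous decay for the feedback control from Lemma \ref{lemma:41}. What the paper's route buys in exchange is the statement, of independent interest, that $v_\eps$ is regular and satisfies the HJB equation as a value function, which is reused in spirit elsewhere (e.g.\ in Proposition \ref{prop:5}). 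Both arguments are sound; yours closes the loop with fewer external ingredients.
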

In order to do so, we start by recalling a standard Lemma.
\begin{Lemma}\label{lemma:41}
Let $x \in \Omega$ and $(Y_t)_{t \geq 0}$ be the solution of
\[
dY_t = b(Y_t) dt + \sqrt{2\nu}dW_t + dk_t,
\]
with initial condition $Y_0 = x$, where $(W_t)_{t \geq 0}$ is a standard $d$ dimensional Brownian motion and $(k_t)_{t \geq 0}$ the process ensuring the fact that $(Y_t)_{t \geq 0}$ is reflected on $\partial \Omega$ given by $|k|_t = \int_0^t\mathbb{1}_{Y_s \in \partial \Omega}d|k|_s$ and $k_t = \int_0^t n(Y_s)d|k|_s$ where $n(x)$ is the unit outward normal vector to $\Omega$ at $x$. Then,
\[
\mathbb{E}\left[e^{\int_0^Ta(Y_t)dt}\right] =_{T \to \infty} O\left(e^{-\lambda_1(-\nu \Delta -b \cdot\nabla -a)T}\right).
\]
\end{Lemma}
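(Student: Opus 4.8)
The plan is to represent the expectation as the value at time $T$ of a linear parabolic flow via the Feynman--Kac formula, and then to sandwich this flow between two explicit separated-variables solutions built from the principal eigenfunction of $-\nu\Delta - b\cdot\nabla - a$. The exponential rate $-\lambda_1$ then comes out directly from the comparison principle.

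First I would introduce $w(t,x) := \mathbb{E}^x\big[\exp(\int_0^t a(Y_s)\,ds)\big]$, where $(Y_t)_{t\geq 0}$ is the reflected diffusion of the statement with $Y_0 = x$. Since the infinitesimal generator of $(Y_t)$ is $\nu\Delta + b\cdot\nabla$ acting on functions satisfying the homogeneous Neumann condition (the reflection term $k_t$ being exactly what enforces $\partial_n = 0$), the Feynman--Kac formula identifies $w$ as the unique smooth bounded solution of
\[
\partial_t w = \nu\Delta w + b\cdot\nabla w + a(x)w \text{ in } (0,\infty)\times\Omega, \quad \partial_n w = 0 \text{ on } \partial\Omega, \quad w(0,\cdot) = 1 .
\]
Justifying this representation in the reflected setting, using the construction of Lions and Sznitman, is the first technical point.

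Next I would invoke the Krein--Rutman theory for the elliptic operator $A := -\nu\Delta - b\cdot\nabla - a$ with Neumann boundary conditions, which (as for the driftless operators recalled before the theorem) admits a principal eigenvalue $\lambda_1 := \lambda_1(A)$ together with an eigenfunction $\phi >> 0$ satisfying $\nu\Delta\phi + b\cdot\nabla\phi + a\phi = -\lambda_1\phi$ and $\partial_n\phi = 0$. Since $\phi$ is smooth and strictly positive on the compact set $\overline\Omega$, there are constants $0 < m \leq M$ with $m \leq \phi \leq M$ on $\overline\Omega$. The function $\overline w(t,x) := m^{-1}e^{-\lambda_1 t}\phi(x)$ is then an \emph{exact} solution of the same parabolic equation and Neumann condition, with initial datum $\overline w(0,\cdot) = m^{-1}\phi \geq 1 = w(0,\cdot)$.

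Finally, the parabolic comparison principle for $\partial_t - \nu\Delta - b\cdot\nabla - a$ — valid since the coefficients are smooth and the boundary conditions match — yields $w \leq \overline w$, hence
\[
\mathbb{E}^x\Big[e^{\int_0^T a(Y_t)\,dt}\Big] = w(T,x) \leq \frac{M}{m}\,e^{-\lambda_1 T},
\]
which is precisely the claimed $O(e^{-\lambda_1 T})$ bound; comparing instead with $M^{-1}e^{-\lambda_1 t}\phi(x)$ from below gives a matching lower bound of order $e^{-\lambda_1 T}$ and shows the estimate is sharp. I expect the main obstacle to lie not in the comparison step, which is routine, but in the rigorous justification of the Feynman--Kac representation for the normally reflected process, and in confirming that the Krein--Rutman conclusion recalled for $-\nu\Delta - a$ carries over to the operator carrying the drift $b\cdot\nabla$; both are standard but deserve care.
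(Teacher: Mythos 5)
Your argument is correct and is essentially the paper's own proof: both hinge on the principal eigenfunction $\phi>>0$ of $-\nu\Delta-b\cdot\nabla-a$, the fact that $e^{-\lambda_1 t}\phi$ solves the linear flow with Neumann conditions, and the two-sided bound $C^{-1}\leq\phi\leq C$ to convert this into the claimed rate. The only cosmetic difference is that the paper writes the Feynman--Kac representation of $\phi$ itself, $\phi(x)=\mathbb{E}\left[e^{\int_0^T(a(Y_t)+\lambda_1)dt}\phi(Y_T)\right]$, rather than passing through the parabolic comparison principle as you do; the two steps are equivalent.
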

\begin{proof}
Consider $\phi>>0$ an eigenfunction associated to $\lambda_1(-\nu \Delta- b\cdot \nabla -a)$. From the PDE satisfied by $\phi$, we know from standard representation results that for any $T > 0$
\[
\phi(x) = \mathbb{E}\left[ e^{\int_0^Ta(Y_t)+ \lambda_1(-\nu\Delta - b\cdot \nabla - a) dt}\phi(Y_T)  \right].
\]
Since there exists $C > 0$ such that $C^{-1}\leq \phi \leq C$, the required result follows.
\end{proof}
\begin{Rem}
This Lemma gives another point of view on the consequence of the assumption $\lambda_1(-\nu\Delta -a) < 0$ in Theorem \ref{thm:1}.
\end{Rem}
Hence, under the assumptions of Theorem \ref{thm:1}, the exponential term in \eqref{opt} grows exponentially in time in expectation if the control $0$ is chosen. Thus, it has to be compensated in order for the integral to make sense, which prohibits the use of the control $0$, and thus justifies once again that the associated value cannot be $0$.

\begin{proof}[Proof of Proposition \ref{prop:contr}]
Consider $u_\eps$ the unique solution of
\be\label{eq3}
\ba
-\nu\Delta u_\eps + \frac12|\nabla_x u_\eps|^2 = a(x) u_\eps + \eps \text{ in } \Omega,\\
\partial_n u_\eps = 0 \text{ on } \partial \Omega,\\
u_\eps >0 \text{ in } \Omega,
\ea
\ee
given by Theorem \ref{thm:2}.
We want to establish that $v_\eps = u_\eps$. Choosing $\tilde \alpha_t = -\nabla_x u_\eps(X^{x}_t)$, we obtain, thanks to the previous Lemma that $\mathbb{E}[\int_0^\infty e^{\int_0^t a(X^{x,\tilde \alpha}_s)ds}dt]$ is bounded, hence since $\nabla_x u_\eps$ is bounded, we also deduce that $v_\eps < \infty$. Moreover, this also implies that $v_\eps \leq u_\eps$. Furthermore, we obviously have that $v_\eps \geq - \eps / (\min_\Omega a) > 0$. We can also obtain using classical techniques of optimal control that $v_\eps$ is Lipschitz continuous, see for instance Fleming and Soner \citep{fleming}, section IV.8\footnote{The results of \citep{fleming} are not directly usable in this context. Nonetheless the techniques of proof which are used in the section we mentioned are, as they rely on the, local in time, controllability of the system.}. In particular, we can now state that the dynamic programing principle holds, and it is now classical to establish that $v_\eps$ is a smooth solution of \eqref{eq3}, and by uniqueness of such solutions, we obtain that $u_\eps = v_\eps$. From Lemma \ref{lemma:uniq}, we know that $v_\eps \downarrow v$ uniformly to some function $v \geq u$ as $\eps \to 0$. From the equation and standard elliptic regularity estimates, we know that $v$ is smooth and satisfies \eqref{kr}, and thus, by uniqueness, $v = u$ and the result is proved.

\end{proof}
\begin{Rem}
Hence, if $u$ is obviously not the value function of \eqref{opt}, it can be approximated by value functions of "nearby" optimal control problems. In terms of control, the interpretation is quite clear. If playing $\alpha_t \equiv 0$ is of course optimal, it leads to the integral in front of it to be arbitrary large, namely because of the assumption $\lambda_1(-\nu \Delta - a) < 0$ as highlighted in the next result. Somehow, the control $\alpha_t := -\nabla_x u(X_t)$ does not allow to obtain a cost of $0$, but it does allow for $\mathbb{E}[\int_0^\infty e^{\int_0^\infty a(X_s)ds}dt] < \infty$, which itself allow for "errors" in the control, without having to pay an infinite cost.
\end{Rem}
We believe that this is a strong argument for the stability of the solution of \eqref{kr}.\\

\begin{Rem}
The optimal control interpretation of \eqref{kr} that we just presented could also give rise to another proof of uniqueness of solutions of \eqref{kr}, namely by showing that any solution is equal to the limit of the $v_\eps$.
\end{Rem}

\subsection{Stability of the solution}
We now indicate a stability argument of the unique solution of \eqref{kr} with respect to the term $a$.
\begin{Prop}\label{prop:stab}
Consider a sequence $(a_n)_{n \geq 0}$ of smooth real functions on $\Omega$, bounded in $C^1(\bar \Omega)$, such that for all $n \geq 0$, the assumptions of Theorem \ref{thm:1} are satisfied. Assume also that $(a_n)_{n\geq 0}$ converges uniformly on $\Omega$ toward some smooth function $a$, which is not of constant sign. Consider the sequence $(u_n)_{n \geq 0}$ of unique solutions of \eqref{kr} associated to $(a_n)_{n\geq 0}$. Then, $(u_n)_{n \geq 0}$ converges uniformly toward a smooth non-negative function $u$, and either $\lambda_1(-\nu\Delta - a) = 0$ and $u = 0$ or $\lambda_1(-\nu\Delta - a ) < 0$ and $u$ is the unique solution of \eqref{kr} associated to $a$.
\end{Prop}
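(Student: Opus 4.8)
The plan is a compactness argument: first establish uniform bounds on the sequence $(u_n)$, extract a limit $u$ solving the limit equation, and then use the sign of $\lambda_1(-\nu\Delta-a)$ to identify $u$. I begin with a uniform $L^\infty$ bound. Re-reading the proof of Lemma~\ref{lemma:apriori}, its only quantitative inputs are an upper bound on $\max_\Omega a$ and the fact that $a$ is not of constant sign; since $(a_n)$ is bounded and converges uniformly to $a$, which takes both signs, both the $L^1$ contradiction argument and the subsequent elliptic bootstrap run uniformly in $n$. Concretely, were $\sup_n\|u_n\|_{L^1}=\infty$, normalizing $w_n:=u_n/\|u_n\|_{L^1}$ and passing to the limit would force $a\cdot c\geq 0$ for the limiting constant $c=1/|\Omega|>0$, contradicting the sign change of $a$; choosing $K>\sup_n\max_\Omega a_n<\infty$ in the bootstrap then gives $\sup_n\|u_n\|_\infty<\infty$.

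Next I upgrade to uniform regularity. The clean device is the Hopf--Cole change of variable $v_n:=e^{-u_n/2\nu}$, which turns \eqref{kr} into the gradient-free semilinear equation $-\nu\Delta v_n=a_n v_n\ln v_n$ with homogeneous Neumann condition $\partial_n v_n=0$. Since $\|u_n\|_\infty$ is uniformly bounded, $v_n$ is trapped between two positive constants, so the right-hand side is uniformly bounded in $L^\infty$; $W^{2,p}$ estimates followed by Schauder estimates, using the uniform $C^1$ (hence $C^{0,\alpha}$) bound on $a_n$, yield a uniform bound on $v_n$, and therefore on $u_n$, in $C^{2,\alpha}(\bar\Omega)$. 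By Arzel\`a--Ascoli a subsequence $u_{n_k}\to u$ in $C^2(\bar\Omega)$, and $u\geq 0$ is a smooth solution of the limit problem $-\nu\Delta u+\frac12|\nabla_x u|^2=a(x)u$ with $\partial_n u=0$.

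The sign of $\lambda_1(-\nu\Delta-a)$ now decides the dichotomy. As $\lambda_1$ is a Rayleigh minimum, $|\lambda_1(-\nu\Delta-a_n)-\lambda_1(-\nu\Delta-a)|\leq\|a_n-a\|_\infty\to 0$, and since each $\lambda_1(-\nu\Delta-a_n)<0$ we obtain $\lambda_1(-\nu\Delta-a)\leq 0$. If $\lambda_1(-\nu\Delta-a)=0$, then, as $a\neq 0$ because it changes sign, the proposition established above (if $a\neq 0$ and $\lambda_1(-\nu\Delta-a)\geq 0$ then $u=0$) forces $u=0$. If instead $\lambda_1(-\nu\Delta-a)=-\lambda<0$, take an eigenfunction $\phi>>0$; a direct computation shows that for $\beta>0$ small and $n$ large enough that $\|a_n-a\|_\infty\leq\lambda/2$, the function $\beta\phi$ is a strictly positive subsolution of the $a_n$-equation, the required inequality reducing (using $-\nu\Delta\phi=(a-\lambda)\phi$) to $\frac{\beta}{2}|\nabla_x\phi|^2\leq\bigl(\lambda-(a-a_n)\bigr)\phi$, whose right-hand side is $\geq\frac{\lambda}{2}\phi\geq\frac{\lambda}{2}\min_\Omega\phi>0$ while the left-hand side is $\leq\frac{\beta}{2}\max_\Omega|\nabla_x\phi|^2$; thus $\beta:=\lambda\min_\Omega\phi/\max_\Omega|\nabla_x\phi|^2$ works uniformly in $n$. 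Lemma~\ref{lemma:uniq} then gives $u_n\geq\beta\phi$, whence $u\geq\beta\phi>>0$, so $u$ is the unique positive solution of \eqref{kr} for $a$ provided by Theorem~\ref{thm:1}.

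In both cases the subsequential limit is uniquely determined---either $0$ or the unique solution---so the full sequence $(u_n)$ converges uniformly to it. The main obstacle I anticipate is precisely the uniform lower bound when $\lambda_1(-\nu\Delta-a)<0$: one must exclude the degeneration $u_n\to 0$, and the decisive point is that the \emph{strict} negativity of the limiting eigenvalue leaves enough slack to manufacture a single positive subsolution, built from the limiting eigenfunction $\phi$, that remains valid for all $a_n$ with $n$ large. By comparison, the uniform regularity is routine once the $L^\infty$ bound is secured, and the continuity of $\lambda_1$ under uniform convergence of $a_n$ is elementary.
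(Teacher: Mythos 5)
Your proof is correct and follows essentially the same strategy as the paper's: uniform a priori bounds and compactness, the dichotomy on the sign of $\lambda_1(-\nu\Delta-a)$, and a uniform-in-$n$ strictly positive subsolution built from the limiting eigenfunction combined with Lemma \ref{lemma:uniq} to rule out degeneration to $0$ when $\lambda_1<0$. You merely supply details the paper leaves implicit (the explicit choice of $\beta$, the Hopf--Cole regularity upgrade, and the continuity of $\lambda_1$ needed to know $\lambda_1(-\nu\Delta-a)\leq 0$), all of which check out.
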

\begin{proof}
From the proof of Lemma \ref{lemma:apriori}, we know that $(u_n)_{n \geq 0}$ is a bounded sequence of $C^2(\bar \Omega)$. Hence, it has a limit point $v$ which satisfies \eqref{kr}. Either $\lambda_1(-\nu\Delta - a ) = 0$ and thus $v = 0$, or $\lambda_1(-\nu\Delta - a ) < 0$ and from the proof of Lemma \ref{lemma:sub}, we can found a uniform in $n$ sub-solution $\phi$ of 
\[
\ba
-\nu\Delta u + \frac 12 |\nabla_x u|^2 \leq a_n(x) u \text{ in } \Omega,\\
\partial_n u = 0 \text{ on } \partial \Omega,
\ea
\]
such that $\phi$ is bounded from below by a positive constant. Hence, so is $v$. Thus by Lemma \ref{lemma:uniq}, we conclude that the result holds.
\end{proof}

The previous result justifies the phenomenon which is described in Figure $1$ below.

\subsection{Some properties of the solution}

\begin{Prop}\label{prop:mono}
Take $\lambda > 0$ and consider the unique solution $u_\lambda$ of 
\[
\ba
-\nu& \Delta u_\lambda +  \lambda \frac12|\nabla_x u_\lambda |^2 = a(x) u_\lambda \text{ in } \Omega,\\
&\partial_n u = 0 \text{ on } \partial \Omega,\\
&u_\lambda > 0 \text{ in } \Omega.
\ea
\]
Then the map $\lambda \to u_\lambda$ is decreasing with, for all $x \in \Omega$, $\lim_{\lambda \to 0} u_\lambda(x) = +\infty$ and $\lim_{\lambda \to 0} u_\lambda(x) = 0$.
\end{Prop}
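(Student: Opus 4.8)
The plan is to reduce the equation for $u_\lambda$ to the already-solved equation \eqref{kr} by an elementary scaling, after which all three assertions become immediate.

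First I would set $w := \lambda u_\lambda$ and substitute into the equation defining $u_\lambda$. Since $\nabla_x w = \lambda \nabla_x u_\lambda$ and $\Delta w = \lambda \Delta u_\lambda$, multiplying the $u_\lambda$-equation by $\lambda$ turns the term $\lambda^2 \tfrac12 |\nabla_x u_\lambda|^2$ into $\tfrac12 |\nabla_x w|^2$ and gives exactly
\[
-\nu \Delta w + \tfrac12 |\nabla_x w|^2 = a(x)\, w \quad \text{in } \Omega,
\]
together with $\partial_n w = 0$ on $\partial\Omega$ and $w > 0$ in $\Omega$; that is, $w$ solves \eqref{kr}. The crucial observation is that the scaling acts only on the quadratic term and leaves the linear operator $-\nu\Delta - a$ untouched, so the hypotheses of Theorem \ref{thm:1} (namely $\lambda_1(-\nu\Delta-a) < 0$ and $\min_\Omega a < 0$) are precisely the ones that guarantee both the existence and uniqueness of $u_\lambda$ and of the solution of \eqref{kr}.

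By Theorem \ref{thm:1}, equation \eqref{kr} admits a unique positive solution, which I denote by $u$. Hence $w = u$, i.e.
\[
u_\lambda = \frac{u}{\lambda}, \qquad \lambda > 0.
\]
From this explicit relation the three conclusions follow directly. Since $u >> 0$, the map $\lambda \mapsto u/\lambda$ is pointwise (hence uniformly) strictly decreasing, which is the claimed monotonicity. Moreover, for each fixed $x \in \Omega$ we have $\lim_{\lambda\to 0^+} u_\lambda(x) = \lim_{\lambda\to 0^+} u(x)/\lambda = +\infty$ and $\lim_{\lambda\to+\infty} u_\lambda(x) = \lim_{\lambda\to+\infty} u(x)/\lambda = 0$. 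I note that the second limit in the statement should read $\lambda \to +\infty$, as both limits cannot be taken at $\lambda \to 0$.

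There is essentially no genuine obstacle here: the single point requiring a moment's care is verifying that the change of unknown $w = \lambda u_\lambda$ maps the $u_\lambda$-equation onto \eqref{kr} with the identical data $(\nu, a)$, so that the uniqueness supplied by Theorem \ref{thm:1} can legitimately be invoked to identify $w$ with $u$. Everything beyond that is elementary bookkeeping on the explicit formula $u_\lambda = u/\lambda$.
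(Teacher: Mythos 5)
Your proof is correct and rests on exactly the identity the paper uses, namely that $w=\lambda u_\lambda$ solves \eqref{kr} with the same data, so that uniqueness gives $\lambda u_\lambda = u_1$; the only cosmetic difference is that the paper derives the monotonicity from Lemma \ref{lemma:uniq} while you read it off directly from the formula $u_\lambda = u_1/\lambda$, and you are also right that the second limit in the statement should be $\lambda \to +\infty$.
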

\begin{proof}
The fact that the map is decreasing simply follows from Lemma \ref{lemma:uniq}. The limit can be observed easily by remarking that in this quadratic case, $\lambda u_\lambda = u_1$.
\end{proof}
\begin{Prop}\label{prop:mono2}
Consider $a_1 \leq a_2$ such that for $i = 1,2$, the assumptions of Theorem \ref{thm:1} are satisfied. We note $u_1$ and $u_2$ the respective solutions. It then holds that $u_1 \leq u_2$.
\end{Prop}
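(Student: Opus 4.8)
The plan is to deduce the inequality directly from the comparison Lemma \ref{lemma:uniq}, by viewing $u_1$ as a subsolution and $u_2$ as a supersolution of the \emph{same} equation, namely the one associated with the larger coefficient $a_2$. No new estimate is needed; the entire content is in correctly setting up the two differential inequalities so that Lemma \ref{lemma:uniq} applies verbatim.

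First I would record that both $u_1$ and $u_2$ are bounded below by a positive constant, i.e. $u_1 >> 0$ and $u_2 >> 0$. This follows because each $u_i$ is a smooth solution of \eqref{kr}, hence positive and continuous on the compact set $\bar\Omega$ (positivity up to the boundary being guaranteed by the Hopf lemma together with the homogeneous Neumann condition), so it attains a strictly positive minimum. Next I would rewrite the equation satisfied by $u_1$ using $a_1 \leq a_2$ and $u_1 > 0$: since $a_1(x) u_1 \leq a_2(x) u_1$, the exact identity
\[
-\nu\Delta u_1 + \frac12|\nabla_x u_1|^2 = a_1(x) u_1
\]
yields the inequality
\[
-\nu\Delta u_1 + \frac12|\nabla_x u_1|^2 \leq a_2(x) u_1 \text{ in } \Omega,
\]
so that $u_1$ is a subsolution of the equation with coefficient $a_2$. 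On the other hand $u_2$ satisfies the corresponding identity with $a_2$, hence in particular
\[
-\nu\Delta u_2 + \frac12|\nabla_x u_2|^2 \geq a_2(x) u_2 \text{ in } \Omega,
\]
i.e. $u_2$ is a supersolution.

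Finally I would apply Lemma \ref{lemma:uniq} with the function $a$ there taken to be $a_2$, the subsolution taken to be $u_1$, and the supersolution taken to be $u_2$. The hypothesis of that lemma that $a_2$ takes both positive and negative values is satisfied: since $a_2$ obeys the assumptions of Theorem \ref{thm:1}, one has $\min_\Omega a_2 < 0$ and, by the Remark following Theorem \ref{thm:1}, $\max_\Omega a_2 > 0$. The conclusion of the lemma then gives $u_1 \leq u_2$, as desired. The only point requiring care is the genuine verification of the hypotheses of Lemma \ref{lemma:uniq}, namely the strict positivity $u_i >> 0$ up to the boundary and the sign-changing property of $a_2$; since neither presents any real difficulty, I do not expect a substantial obstacle in this argument.
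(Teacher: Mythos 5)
Your proof is correct and follows essentially the same route as the paper: both arguments reduce the statement to the comparison Lemma \ref{lemma:uniq} by exploiting $(a_2-a_1)u_i \geq 0$, the only (immaterial) difference being that you view $u_1$ as a subsolution of the $a_2$-equation whereas the paper views $u_2$ as a supersolution of the $a_1$-equation. Your explicit verification of the hypotheses of the lemma ($u_i >> 0$ and the sign change of the coefficient) is a welcome addition that the paper leaves implicit.
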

\begin{proof}
If this can be seen at the level of the optimal control interpretation, it is an immediate consequence of Lemma \ref{lemma:uniq} apply on $a_1$ since $(a_2 - a_1)u_2 \geq 0$.
\end{proof}

We now prove a monotone property of the solution in a particular case.
\begin{Prop}\label{prop:5}
Assume that $\Omega = (0,1)$, that the assumptions of Theorem \ref{thm:1} hold and that $a$ is a non-decreasing function, then so is $u$, the unique solution of \eqref{kr}.
\end{Prop}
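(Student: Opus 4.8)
The plan is to prove that $w := u'$ is non-negative, by differentiating the equation and invoking a maximum principle. Since $u$ is smooth by Theorem \ref{thm:1} and $a$ is smooth, I can differentiate the one-dimensional equation $-\nu u'' + \frac12|u'|^2 = a u$ in $x$ to obtain, with $w = u'$,
\[
-\nu w'' + u' w' - a(x)\, w = a'(x)\, u \quad \text{in } (0,1).
\]
Because $a$ is non-decreasing and $u>0$, the right-hand side $a' u$ is non-negative. Moreover the homogeneous Neumann condition $u'(0)=u'(1)=0$ becomes the Dirichlet condition $w(0)=w(1)=0$. It thus remains to show that a supersolution $w$ of the linear operator $L_0 := -\nu\partial_{xx} + u'\partial_x - a$, vanishing at both endpoints, is non-negative.

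The hard part is that the zeroth-order coefficient $-a$ of $L_0$ is not sign-definite (recall $\max_\Omega a>0$), so the maximum principle cannot be applied directly. The way around this is to exploit that $u$ is itself a strictly positive supersolution of $L_0$: a direct computation using the equation gives
\[
L_0 u = -\nu u'' + |u'|^2 - a u = \tfrac12 |u'|^2 \geq 0.
\]
This is precisely the positive-supersolution mechanism behind the generalized maximum principle, in the same spirit as the Laetsch argument used in Lemma \ref{lemma:uniq}. Concretely, since $u>>0$, I would set $w = u z$ and compute that $z$ solves
\[
-\nu z'' + \frac{u'(u-2\nu)}{u}\, z' + \frac{|u'|^2}{2u}\, z = a' \geq 0 \quad \text{in } (0,1), \qquad z(0)=z(1)=0.
\]
The decisive gain is that, after this change of unknown, the zeroth-order coefficient $\frac{|u'|^2}{2u}$ is non-negative.

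Finally I would apply the strong maximum principle to this transformed problem. If $z$ attained a negative minimum at an interior point $x_0$, then $z'(x_0)=0$, $z''(x_0)\geq 0$ and $\frac{|u'(x_0)|^2}{2u(x_0)}z(x_0)\leq 0$, so the left-hand side would be $\leq 0$ at $x_0$ while $a'(x_0)\geq 0$; the strong maximum principle then forces $z$ to be constant, hence $z\equiv 0$ by the boundary condition, contradicting $z(x_0)<0$. Therefore $z\geq 0$, whence $w = u' = u z \geq 0$ and $u$ is non-decreasing. The only genuine obstacle is the non-sign-definiteness of $-a$, which the supersolution $u$ neutralizes; the remaining steps are a routine one-dimensional maximum-principle computation.
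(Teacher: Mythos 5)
Your proof is correct, but it takes a genuinely different route from the paper. The paper argues probabilistically: it works with the $\eps$-regularized problem, uses the optimal control representation of $u_\eps$ from Section \ref{sec:optcontr}, runs the optimizer for the problem started at $y$ as a suboptimal control for the problem started at $x\leq y$, and invokes the pathwise comparison $X^x_t\leq X^y_t$ for reflected diffusions driven by the same Brownian motion, before letting $\eps\to 0$. You instead differentiate the equation, observe that $w=u'$ is a supersolution of the linearized operator $L_0=-\nu\partial_{xx}+u'\partial_x-a$ with Dirichlet data $w(0)=w(1)=0$, and neutralize the indefinite zeroth-order term by the ground-state substitution $w=uz$, which is legitimate precisely because $L_0u=\tfrac12|u'|^2\geq 0$ and $u>>0$; your computation of the transformed equation for $z$ is correct, and the weak/strong maximum principle (now with non-negative zeroth-order coefficient $|u'|^2/(2u)$) gives $z\geq 0$, hence $u'\geq 0$. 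One cosmetic remark: the pointwise evaluation at an interior negative minimum only yields that both sides vanish there, which is not by itself a contradiction; it is the strong maximum principle step (forcing $z$ to be the constant $z(x_0)<0$, incompatible with $z(0)=0$) that closes the argument, as you correctly state. Your approach is more elementary and self-contained for this one-dimensional statement, avoiding the $\eps$-regularization and the control machinery entirely; the paper's probabilistic argument has the advantage of not requiring differentiation of the equation and of extending more naturally to settings where a pathwise ordering of the state processes is available.
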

\begin{proof}
Consider for $\eps > 0$, $u_\eps$ the unique solution of 
\[
\ba
-\nu\partial_{xx}u_\eps + \frac12|\partial_x u_\eps|^2 = a(x) u_\eps + \eps \text{ in } \Omega,\\
\partial_n u_\eps = 0 \text{ on } \Omega.
\ea
\]
Take $0 < x\leq y < 1$ and consider, in the optimal control formulation of the problem, an optimal control $\alpha_t$ for the problem starting in $y$. Note that we can choose $\alpha_t = -\partial_x u(t,X_t)$. The associated state evolves according to
\[
dX^{y}_t = \alpha_t dt + \sqrt{2\nu}dW_t + dk_t,
\]
where $(k_t)$ is defined as in Lemma \ref{lemma:41} and $(W_t)_{t \geq 0}$ is a standard Brownian motion. Let us now remark that we can also choose the same control $\alpha_t$ in the problem starting form $x$, that the associated state then evolves according to
\[
dX^{x}_t = \alpha_t dt + \sqrt{2\nu}dW_t + dk'_t,
\]
for some other process $(k'_t)_{t \geq 0}$ defined similarly. From standard properties of reflected diffusions \citep{lionssznitman}, we know that, almost surely $X^x_t \leq X^y_t$. Hence, since
\[
u_\eps(x) - u_\eps(y) \leq \mathbb{E}\left[ \int_0^\infty \left(e^{\int_0^ta(X^x_s)ds} -e^{\int_0^ta(X^y_s)ds}\right)\left(\frac12|\alpha_t|^2 + \eps\right)dt \right].
\]
The right hand side is non-positive since $a$ is non-decreasing. Hence $u_\eps$ is non-decreasing and the result follows by passing to the limit $\eps \to 0$.
\end{proof}

\subsection{Interpretation of Theorem \ref{thm:1} as a non-linear version of Krein-Rutman}
In this section we give a visual illustration of the type of phenomenon which is happening, as well as mathematical justifications for the illustration we provide. We shall consider here the problem
\be\label{pb2}
\ba
-\nu \Delta u + \eps|\nabla_x u|^2 + u = \lambda r(x) u \text{ in } \Omega,\\
u \geq 0 \text{ in } \Omega,\\
\partial_n u = 0 \text{ on } \partial \Omega,
\ea
\ee
where $r$ is a smooth non-negative non-constant function which vanishes at some point such that $\lambda_1(-\nu\Delta -(r-1) ) = -1$. In this section, $\eps$ and $\lambda$ are parameters that are aimed to vary. \\

From the assumptions on $r$, there exists $\lambda_c \in (0,1)$ such that for $\lambda \in [0,\lambda_c]$, for any $\eps > 0$, the unique solution of \eqref{pb2} is $0$. More precisely, $\lambda_c$ is such that $\lambda_1(-\nu\Delta - \lambda_c r +1) = 0$. Then, for $\lambda > \lambda_c$, for any $\eps >0$ there are two solutions to \eqref{pb2}, the constant $0$ and a solution $u_{\eps,\lambda}$ given by Theorem \ref{thm:1}. Furthermore, thanks to Proposition \ref{prop:stab}, we know that, for any $\eps > 0$, the function $\lambda \to u_{\eps,\lambda}$ is continuous on $(\lambda_c,1]$. This provides, for any $\eps > 0$, a continuum of functions indexed by $\lambda$, which goes out from $0$ at $\lambda_c$. (The continuity when $\lambda \to \lambda_c$ is rather obvious). Finally, for $\eps' < \eps$, we know that for any $\lambda > \lambda_c$, $u_{\eps',\lambda} \geq u_{\eps,\lambda}$ thanks to Proposition \ref{prop:mono}.\\

On the other hand, when $\eps = 0$ we know that solutions of \eqref{pb2} are only possible for $\lambda =\lambda_c$, locally around $\lambda_c$, and all those solutions are the same up to a multiplication by a positive constant. These phenomena are summarized in Figure \ref{fig:1}.\\

\begin{figure}
\centering
  \centering
  \includegraphics[width=0.6\linewidth]{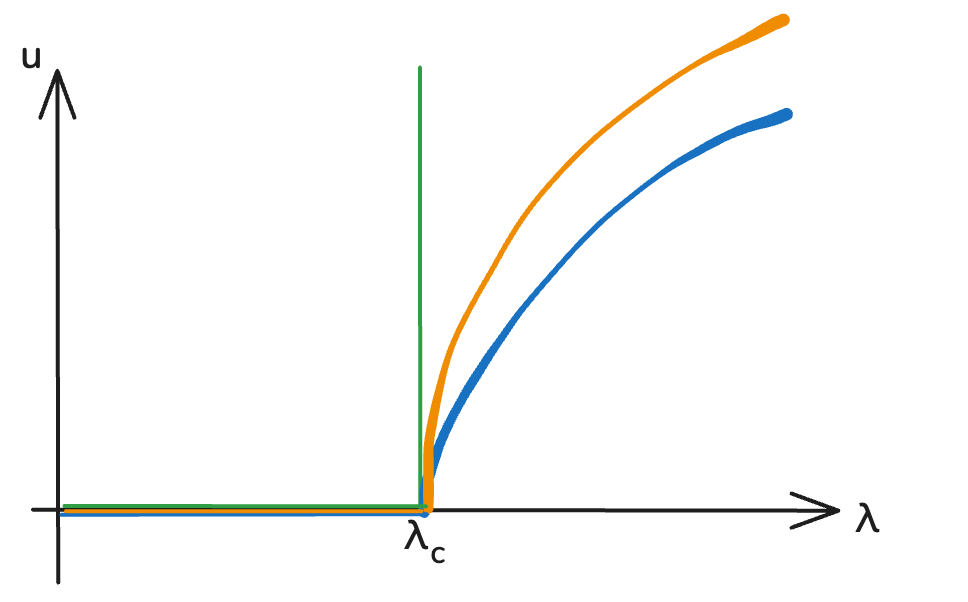}
  \caption{Scheme of the sets of solution of \eqref{kr} for various values of $\epsilon$, as functions of $\lambda$. The vertical axes can be interpreted as the $\|\cdot\|_\infty$ norm of the solution. In green, $\epsilon = 0$, in orange $\eps= \eps'>0$ and in blue $\eps > \eps'$.}
  \label{fig:1}
\end{figure}%

We believe that Figure \ref{fig:1} illustrates well the fact that the non-linearity, measured here with $\eps$ bends the straight continuum of solutions of the linear problem into the branch indexed by $\lambda \in [\lambda_c,1]$. Moreover, this idea is somehow justified by the next result.
\begin{Prop}\label{prop:3}
For any $\phi$ solution of \eqref{pb2} with $\eps = 0$ and $\lambda = \lambda_c$, there exists a function $\Lambda : (0,1]\to [\lambda_c, \infty)$ such that 
\[
\lim_{\eps \to 0}\|\phi - u_{\eps,\Lambda(\eps)}\|_\infty = 0.
\]
\end{Prop}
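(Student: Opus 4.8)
The plan is to reduce the whole statement to the exact scaling between \eqref{pb2} and \eqref{kr} and then to a bifurcation analysis of the solution branch that emanates from $0$ at $\lambda_c$. Writing $a_\lambda := \lambda r - 1$, equation \eqref{pb2} reads $-\nu\Delta u + \eps|\nabla_x u|^2 = a_\lambda u$, so by a direct rescaling (cf. Proposition \ref{prop:mono}) the function $2\eps\,u_{\eps,\lambda}$ is exactly the unique solution $u^{(\lambda)}$ of \eqref{kr} with $a = a_\lambda$ whenever $\lambda > \lambda_c$; that is, $u_{\eps,\lambda} = \tfrac{1}{2\eps}u^{(\lambda)}$. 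Moreover, since $\lambda_1(-\nu\Delta - a_{\lambda_c}) = 0$ is a simple eigenvalue with positive eigenfunction $\phi_0 \gg 0$, every admissible $\phi$ in the statement is of the form $\phi = s\phi_0$ with $s = \|\phi\|_\infty/\|\phi_0\|_\infty \geq 0$ (the case $\phi \equiv 0$ being trivial, taking $\Lambda \equiv \lambda_c$). Thus it suffices to control the amplitude and the shape of $u^{(\lambda)}$ as $\lambda \downarrow \lambda_c$.

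The core step is the shape-convergence lemma: setting $m_\lambda := \|u^{(\lambda)}\|_\infty$ and $\psi_\lambda := u^{(\lambda)}/m_\lambda$, one has $m_\lambda \to 0$ and $\psi_\lambda \to \phi_0/\|\phi_0\|_\infty$ uniformly as $\lambda \downarrow \lambda_c$. That $m_\lambda \to 0$ is precisely Proposition \ref{prop:stab}, applied with the limiting coefficient $a_{\lambda_c}$, which is not of constant sign (it is negative where $r$ vanishes and positive somewhere, since otherwise $\lambda_1(-\nu\Delta - a_{\lambda_c})$ would be strictly positive) and has $\lambda_1 = 0$. For the convergence of $\psi_\lambda$ I would first establish a uniform gradient bound. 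The normalized function solves $-\nu\Delta\psi_\lambda + \tfrac{m_\lambda}{2}|\nabla_x\psi_\lambda|^2 = a_\lambda\psi_\lambda$, and the Hopf--Cole substitution $\chi_\lambda := \exp(-\tfrac{m_\lambda}{2\nu}\psi_\lambda)$ turns this into the linear equation $-\nu\Delta\chi_\lambda = -\tfrac{m_\lambda}{2\nu}a_\lambda\psi_\lambda\chi_\lambda$ with homogeneous Neumann condition, whose right-hand side is $O(m_\lambda)$ in $L^\infty$. Elliptic regularity gives $\|\nabla_x\chi_\lambda\|_\infty = O(m_\lambda)$, and since $\nabla_x\psi_\lambda = -\tfrac{2\nu}{m_\lambda}\chi_\lambda^{-1}\nabla_x\chi_\lambda$ with $\chi_\lambda \geq e^{-m_\lambda/2\nu} \geq \tfrac12$, this yields $\|\nabla_x\psi_\lambda\|_\infty = O(1)$ uniformly. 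With this bound the right-hand side $a_\lambda\psi_\lambda - \tfrac{m_\lambda}{2}|\nabla_x\psi_\lambda|^2$ is bounded in $L^\infty$, so $\psi_\lambda$ is bounded in $C^{1,\alpha}(\bar\Omega)$ and precompact in $C^1$. Any limit $\psi_*$ satisfies $-\nu\Delta\psi_* = a_{\lambda_c}\psi_*$ with $\psi_* \geq 0$ and $\|\psi_*\|_\infty = 1$ (the quadratic term drops out because $m_\lambda \to 0$ while $\nabla_x\psi_\lambda$ stays bounded, and $\lambda \to \lambda_c$); by simplicity of the principal eigenvalue $\psi_* = \phi_0/\|\phi_0\|_\infty$, and uniqueness of the limit upgrades precompactness to full convergence.

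It then remains to match amplitudes. By Propositions \ref{prop:stab} and \ref{prop:mono2} the map $\lambda \mapsto m_\lambda$ is continuous and non-decreasing on $(\lambda_c,\infty)$ with $m_\lambda \to 0$ as $\lambda \downarrow \lambda_c$, so for every sufficiently small $\eps$ there is $\Lambda(\eps) > \lambda_c$ with $m_{\Lambda(\eps)} = 2\eps\|\phi\|_\infty$ (extend $\Lambda$ arbitrarily, say as a constant, on the remaining compact range of $\eps$, which does not affect the limit). Since the target amplitude tends to $0$, monotonicity forces $\Lambda(\eps) \downarrow \lambda_c$, and then
\[
u_{\eps,\Lambda(\eps)} = \frac{m_{\Lambda(\eps)}}{2\eps}\,\psi_{\Lambda(\eps)} = \|\phi\|_\infty\,\psi_{\Lambda(\eps)} \to \|\phi\|_\infty\,\frac{\phi_0}{\|\phi_0\|_\infty} = \phi \quad \text{in } L^\infty(\Omega) \text{ as } \eps \to 0^+,
\]
which is the claim. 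The main obstacle is the shape-convergence lemma, and within it the uniform gradient estimate on the normalized solutions $\psi_\lambda$ as the coefficient $m_\lambda$ of the quadratic term degenerates to $0$: a naive Bernstein estimate degenerates in this limit, whereas the Hopf--Cole linearization above keeps the bound uniform and makes the limiting linear eigenvalue equation — hence the identification with $\phi_0$ through simplicity of $\lambda_1$ — appear cleanly.
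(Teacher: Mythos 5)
Your proof is correct, and its skeleton coincides with the paper's: you fix $\Lambda(\eps)$ by matching sup-norms (your condition $m_{\Lambda(\eps)}=2\eps\|\phi\|_\infty$ is, through the exact scaling $u^{(\lambda)}=2\eps\,u_{\eps,\lambda}$ between \eqref{pb2} and \eqref{kr}, literally the paper's choice $\|u_{\eps,\Lambda(\eps)}\|_\infty=\|\phi\|_\infty$, justified in both cases by the monotonicity/continuity of $\lambda\mapsto\|u_{\eps,\lambda}\|_\infty$), you extract a uniformly convergent subsequence, and you identify the limit via the linear problem and the simplicity of the principal eigenvalue. Where you genuinely differ is the compactness step. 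The paper obtains it from an integral a priori estimate ``as in Lemma \ref{lemma:apriori}'' plus elliptic regularity, the quadratic term then vanishing in the limit because it carries a factor $\eps$; note that a literal repetition of Lemma \ref{lemma:apriori}'s integration only yields $\eps\int_\Omega|\nabla_x u_{\eps,\Lambda(\eps)}|^2\leq C$, and one should rather test the equation against $u_{\eps,\Lambda(\eps)}$ itself to get the $\eps$-uniform $H^1$ bound the paper asserts. You instead normalize the rescaled solutions by $m_\lambda$ and run a Hopf--Cole change of unknown $\chi_\lambda=\exp(-\tfrac{m_\lambda}{2\nu}\psi_\lambda)$ to get a uniform pointwise gradient bound on the profiles as the quadratic coefficient degenerates; this is more work but gives stronger ($C^1$) control, makes the passage to the limiting linear eigenvalue equation completely explicit, and sidesteps the slightly loose regularity step in the paper. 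Both arguments lean on the same auxiliary results (the increasing-norm Lemma, Propositions \ref{prop:stab} and \ref{prop:mono2}), and your amplitude-matching conclusion is identical to the paper's.
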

The proof of this result relies on the following elementary Lemma.
\begin{Lemma}
For $\eps > 0$, the function $\psi : \lambda \to \|u_{\eps,\lambda}\|_\infty$ is increasing on $[\lambda_c,\infty)$.
\end{Lemma}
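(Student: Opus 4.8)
The plan is to reduce the monotonicity of $\psi$ to the comparison results already established, applied to the family of zeroth-order coefficients $a_\lambda(x) := \lambda r(x) - 1$, for which equation \eqref{pb2} reads $-\nu\Delta u + \eps|\nabla_x u|^2 = a_\lambda(x)u$. First I would record that for every $\lambda > \lambda_c$ the coefficient $a_\lambda$ meets the hypotheses of Theorem \ref{thm:1}: since $r\geq 0$ and $\lambda \mapsto \lambda r$ is pointwise non-decreasing, the operator $-\nu\Delta - a_\lambda = -\nu\Delta - \lambda r + 1$ is decreasing in $\lambda$, so its first eigenvalue is decreasing and passes through $0$ exactly at $\lambda_c$, giving $\lambda_1(-\nu\Delta - a_\lambda) < 0$ for $\lambda > \lambda_c$; moreover $\min_\Omega a_\lambda = \lambda\min_\Omega r - 1 = -1 < 0$ because $r$ vanishes somewhere. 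Hence $u_{\eps,\lambda}$ is the unique positive solution furnished by Theorem \ref{thm:1}, while $u_{\eps,\lambda_c}=0$ and $\psi(\lambda_c)=0$.

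Next I would fix $\lambda_c < \lambda_1 < \lambda_2$ and observe that $a_{\lambda_1} \leq a_{\lambda_2}$ everywhere, with strict inequality on the open set $\{r>0\}$. Consequently $u_{\eps,\lambda_1}$ is a subsolution of the $\lambda_2$-equation, since $-\nu\Delta u_{\eps,\lambda_1} + \eps|\nabla_x u_{\eps,\lambda_1}|^2 = a_{\lambda_1}u_{\eps,\lambda_1} \leq a_{\lambda_2}u_{\eps,\lambda_1}$, the last inequality holding because $(\lambda_2-\lambda_1)r\,u_{\eps,\lambda_1}\geq 0$. I would then invoke Lemma \ref{lemma:uniq}, which applies verbatim with the coefficient $\eps$ in place of $\frac12$ (the proof never uses the specific constant, only its positivity, as the rescaling identity of Proposition \ref{prop:mono} makes transparent). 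This yields $u_{\eps,\lambda_1}\leq u_{\eps,\lambda_2}$ together with the dichotomy that either the two coincide or the inequality is strict at every point. Coincidence is impossible: it would force $(\lambda_2-\lambda_1)r\,u_{\eps,\lambda_1}\equiv 0$, contradicting $u_{\eps,\lambda_1}>> 0$ and $r\not\equiv 0$. Therefore $u_{\eps,\lambda_1}<u_{\eps,\lambda_2}$ throughout $\Omega$.

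Finally I would upgrade this strict pointwise ordering to a strict ordering of the sup-norms. Letting $x_0\in\bar\Omega$ be a point where $u_{\eps,\lambda_1}$ attains its maximum, compactness of $\bar\Omega$ and continuity give $\|u_{\eps,\lambda_2}\|_\infty \geq u_{\eps,\lambda_2}(x_0) > u_{\eps,\lambda_1}(x_0) = \|u_{\eps,\lambda_1}\|_\infty$, so $\psi(\lambda_1) < \psi(\lambda_2)$. Combined with $\psi(\lambda_c)=0 < \|u_{\eps,\lambda}\|_\infty = \psi(\lambda)$ for every $\lambda>\lambda_c$ (as $u_{\eps,\lambda}>> 0$), this proves that $\psi$ is increasing on $[\lambda_c,\infty)$.

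The main obstacle, and the only genuinely delicate point, is the passage through Lemma \ref{lemma:uniq}: one must be sure that the uniqueness and comparison machinery, stated for the normalization $\frac12|\nabla_x u|^2$, transfers to the gradient coefficient $\eps$, and that the equality case of its dichotomy can be excluded. Both are handled above --- the former because the argument of Lemma \ref{lemma:uniq} is insensitive to the value of the positive constant in front of $|\nabla_x u|^2$, the latter because $r$ is non-negative and non-trivial while $u_{\eps,\lambda_1}$ is bounded below by a positive constant. Everything else is a routine application of comparison and of the elementary fact that a strict pointwise inequality between continuous functions on a compact set forces a strict inequality of their maxima.
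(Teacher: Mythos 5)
Your argument is correct, and it shares the paper's skeleton --- $u_{\eps,\lambda_1}$ is a subsolution of the $\lambda_2$-equation because $(\lambda_2-\lambda_1)\,r\,u_{\eps,\lambda_1}\geq 0$, and Lemma \ref{lemma:uniq} (which, as you note, is insensitive to replacing the coefficient $\tfrac12$ by $\eps$) yields $u_{\eps,\lambda_1}\leq u_{\eps,\lambda_2}$, hence $\psi$ non-decreasing --- but it finishes the strictness step by a different mechanism. You exploit the dichotomy of Lemma \ref{lemma:uniq} (either $u=v$ or $u<v$), rule out coincidence by subtracting the two equations (which would force $(\lambda_2-\lambda_1)r\,u_{\eps,\lambda_1}\equiv 0$ against $u_{\eps,\lambda_1}>>0$ and $r\not\equiv 0$), and then convert the strict pointwise ordering into a strict ordering of the maxima. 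The paper instead supposes $\psi(\lambda)=\psi(\lambda')$, exhibits a common maximum point $x^*$ of both solutions, evaluates the two PDEs there to force $r(x^*)=0$, and then observes that the equation for $u_{\eps,\lambda'}$ at $x^*$ would give $u_{\eps,\lambda'}(x^*)=0$, a contradiction. Your route buys simplicity --- no second-order evaluation at a maximum point --- at the price of one point you should make explicit: strict inequality $u_{\eps,\lambda_1}<u_{\eps,\lambda_2}$ on the \emph{open} set $\Omega$ does not by itself force strict inequality of the suprema (the maximum of $u_{\eps,\lambda_1}$ may sit on $\partial\Omega$), so you need the strict ordering up to the boundary, which follows from Hopf's boundary point lemma combined with the Neumann condition $\partial_n(u_{\eps,\lambda_2}-u_{\eps,\lambda_1})=0$; this is exactly the strictness already used inside the proof of Lemma \ref{lemma:uniq}, so the gap is cosmetic rather than substantive.
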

\begin{proof}
Take $\eps > 0$. Thanks to Lemma \ref{lemma:uniq}, we know that $\psi$ is non-decreasing. Assume that for some $\lambda' > \lambda \geq \lambda_c$, we have $\psi(\lambda') = \psi(\lambda)$. This implies that there exists some $x^* \in \Omega$ such that for all $x \in \Omega, u_{\eps,\lambda}(x) \leq u_{\eps,\lambda'}(x) \leq u_{\eps,\lambda'}(x^*)= u_{\eps,\lambda}(x^*)$. Note that this also has to be true if we replace $\lambda'$ by $\lambda''$ for $\lambda'' \in [\lambda,\lambda']$. Evaluating the PDE in \eqref{pb2} at $x^*$ for these functions and using the fact that $x^*$ is a point of maximum of $u_{\eps,\lambda} - u_{\eps,\lambda'}$, we find out that
\[
u_{\eps,\lambda}(x^*) - u_{\eps,\lambda'}(x^*) \leq (\lambda u_{\eps,\lambda}(x^*) - \lambda' u_{\eps,\lambda'}(x^*))r(x^*).
\]
The previous leads to
\[
0 \leq \lambda( u_{\eps,\lambda}(x^*) -  u_{\eps,\lambda'}(x^*))r(x^*) + r(x^*)u_{\eps,\lambda'}(x^*)(\lambda - \lambda').
\]
Since $u_{\eps,\lambda'}(x^*)= u_{\eps,\lambda}(x^*)$, this implies that $r(x^*) = 0$ since $\lambda < \lambda'$, which is not possible because, evaluating the PDE satisfied by $u_{\eps,\lambda'}$ at $x^*$, this would lead to the fact that $u_{\eps,\lambda'}(x^*) = 0$ which would imply $u_{\eps,\lambda'} \equiv 0$ which is a contradiction. Hence, $\psi$ is increasing.
\end{proof}
\begin{proof}[Proof of Proposition \ref{prop:3}]
Let $\phi$ be a solution of \eqref{pb2} with $\eps = 0$ and $\lambda = \lambda_c$. For $\eps > 0$, consider $\Lambda(\eps)$ to be the unique $\lambda$ such that $\|u_{\eps,\lambda}\|_\infty = \|\phi\|_\infty$. Note that $\Lambda(\eps)$ is always well defined for $\eps$ small enough thanks to Proposition \ref{prop:mono}. Arguing as in the proof of Lemma \ref{lemma:apriori}, we deduce that $(\|\nabla_xu_{\eps,\Lambda(\eps)}\|_{L^2})_{\eps > 0}$ is a bounded sequence. Hence, from the PDE satisfied by $u_{\eps,\Lambda(\eps)}$, we obtain additional regularity bounds and thus the fact that $(u_{\eps,\Lambda(\eps)})_{\eps}$ converges, up to a subsequence, uniformly toward some limit. Because this limit has to satisfy the linear PDE, and because for all $\eps$, $\|u_{\eps,\Lambda(\eps)}\|_\infty = \|\phi\|_\infty$, we indeed obtain that $(u_{\eps,\Lambda(\eps)})_{\infty}$ converges uniformly toward $\phi$.
\end{proof}

Furthermore, we can establish that, for any $\eps > 0$,  the branch of solutions $(u_{\eps,\lambda})_{\lambda}$ is $C^\infty$.
\begin{Prop}
Take $\eps > 0$ and $\lambda > \lambda_c$. There exists $\delta > 0$ such that $F:(\lambda-\delta,\lambda+\delta) \to L^\infty(\Omega), \lambda \to u_{\eps,\lambda}$ is $C^\infty$.
\end{Prop}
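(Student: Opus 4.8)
The plan is to prove smoothness of the branch by the implicit function theorem in Hölder spaces, the whole difficulty being reduced to the invertibility of the linearization. Fix the given $\lambda > \lambda_c$, write $u_0 := u_{\eps,\lambda}$, pick $\alpha \in (0,1)$, and set $X := \{w \in C^{2,\alpha}(\bar\Omega) : \partial_n w = 0 \text{ on } \partial\Omega\}$ and $Y := C^{0,\alpha}(\bar\Omega)$. I would recast \eqref{pb2} as the zero set of the map $G : \R \times X \to Y$ defined by
\[
G(\mu,w) := -\nu\Delta w + \eps|\nabla_x w|^2 + w - \mu\, r(x)\, w .
\]
This map is real-analytic in $(\mu,w)$ (it is affine in $\mu$, and the only nonlinearity $w \mapsto |\nabla_x w|^2$ is a continuous quadratic map on $X$, since $C^{1,\alpha}(\bar\Omega)$ is a Banach algebra), and by construction $G(\lambda,u_0) = 0$.

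The core of the argument is to show that $D_wG(\lambda,u_0)$ is an isomorphism of $X$ onto $Y$. A direct computation gives the linearized operator
\[
L\phi := D_wG(\lambda,u_0)[\phi] = -\nu\Delta\phi + 2\eps\,\nabla_x u_0\cdot\nabla_x\phi + \big(1 - \lambda r(x)\big)\phi ,
\]
a uniformly elliptic second-order operator with smooth coefficients and Neumann boundary conditions, hence a Fredholm operator of index zero from $X$ to $Y$ (the first-order and zeroth-order terms being a compact perturbation of the Neumann Laplacian). To obtain invertibility it therefore suffices to prove that $L$ is injective, which I would deduce from the strict positivity of its principal eigenvalue $\lambda_1(L)$.

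The key observation, and the step I expect to be the crux, is that $u_0$ itself is a strict positive supersolution of $L$. Using that $u_0$ solves \eqref{pb2},
\[
Lu_0 = \big(-\nu\Delta u_0 + \eps|\nabla_x u_0|^2 + u_0 - \lambda r u_0\big) + \eps|\nabla_x u_0|^2 = \eps|\nabla_x u_0|^2 .
\]
Since $u_0$ cannot be constant (a constant solution of \eqref{pb2} would force $\lambda r \equiv 1$, contradicting that $r$ is non-constant), we have $Lu_0 = \eps|\nabla_x u_0|^2 \geq 0$ with $Lu_0 \not\equiv 0$, while $u_0 >> 0$. Testing against the positive principal eigenfunction of the adjoint $L^*$, in the Krein--Rutman framework of \citep{lions2021cours}, then yields $\lambda_1(L) > 0$. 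Consequently $0$ is not an eigenvalue of $L$, so $L$ is injective and, being Fredholm of index zero, an isomorphism.

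Finally I would invoke the analytic implicit function theorem: there exist $\delta > 0$ and a real-analytic (hence $C^\infty$) curve $\mu \mapsto \tilde u_\mu \in X$ with $\tilde u_\lambda = u_0$ and $G(\mu,\tilde u_\mu) = 0$ for $|\mu - \lambda| < \delta$. Shrinking $\delta$ if necessary, $\tilde u_\mu$ remains bounded below by a positive constant, so by the uniqueness in Theorem \ref{thm:1} (equivalently Lemma \ref{lemma:uniq}) it coincides with $u_{\eps,\mu}$. Since the inclusion $C^{2,\alpha}(\bar\Omega) \hookrightarrow L^\infty(\Omega)$ is continuous and linear, $F : \mu \mapsto u_{\eps,\mu}$ is $C^\infty$ as a map into $L^\infty(\Omega)$, which is the desired conclusion.
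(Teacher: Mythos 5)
Your proof is correct, and it reaches the conclusion by a genuinely more structured route than the paper's. The paper first constructs $F'(\lambda)$ by hand: it forms the difference quotient $g_h = -h^{-1}(u_{\eps,\lambda}-u_{\eps,\lambda+h})$, uses the convexity of $p\mapsto |p|^2$ to trap $g_h$ between $0$ and the solution of a fixed linear problem, extracts a weak limit, identifies it as the unique solution of the linearized equation, and only then invokes the implicit function theorem for higher-order smoothness. You instead go straight to the (analytic) implicit function theorem in H\"older spaces, reducing everything to the invertibility of $D_wG(\lambda,u_0)$, which you get from Fredholmness of index zero plus injectivity. The two arguments share the same crux --- that the linearized operator $L=-\nu\Delta+2\eps\nabla_x u_0\cdot\nabla_x+1-\lambda r$ satisfies the maximum principle, equivalently $\lambda_1(L)>0$ --- but the paper merely asserts this ("can be seen in the optimal control interpretation"), whereas you supply a concrete proof: since $u_0$ solves \eqref{pb2}, one has $Lu_0=\eps|\nabla_x u_0|^2\ge 0$ with $Lu_0\not\equiv 0$ (as $u_0$ cannot be constant when $r$ is non-constant) and $u_0>>0$, so pairing with the positive principal eigenfunction of $L^*$ forces $\lambda_1(L)>0$; note that the homogeneous Neumann condition on $u_0$ makes the adjoint boundary condition again a pure Neumann one, so the integration by parts is clean. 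Your approach buys a shorter proof, real-analyticity (not just $C^\infty$) of the branch, and an explicit justification of the step the paper leaves implicit; the paper's difference-quotient argument buys quantitative one-sided bounds on $g_h$ (via comparison with the solution $\tilde v$ of an auxiliary problem) that are in the spirit of the monotonicity results elsewhere in the section and require only weak limits rather than the Fredholm machinery. The only point worth making explicit in your write-up is the identification step at the end: you correctly note that $\tilde u_\mu>>0$ for $\mu$ near $\lambda$, and the uniqueness of Lemma \ref{lemma:uniq} applies because $a=\mu r-1$ still changes sign and $\lambda_1(-\nu\Delta-(\mu r-1))<0$ for all $\mu>\lambda_c$.
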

\begin{proof}
We start by showing that $F$ is differentiable at $\lambda$. Consider $v= F(\lambda + h)$ for $h > \lambda_c- \lambda$. Denote $u = F(\lambda)$. Define $w = u -v$ and observe that
\[
-\nu\Delta w + \eps|\nabla_x u|^2 - \eps |\nabla_x v|^2 + w = -hr(x)u + (\lambda + h)r(x) w \text{ in } \Omega
\]
Take $h < 0$. We already know that $w >>0$ and $\|w\|_\infty\to 0$ as $h \to 0$. Define $g_h = -h^{-1}w$ and observe that, using the convexity of $x \to |x|^2$,
\[
-\nu \Delta g_h + \nabla_xv \cdot\nabla_x g_h + g_h - (\lambda +h) r(x)g_h \leq r(x)u \text{ in } \Omega.
\]
This inequality, as well as the obvious $\partial_n g_h = 0$ on $\partial \Omega$, yields a uniform bound on $\|g_h\|_\infty$. Indeed, the right hand side is constant, thus bounded when $h \to 0$ and the elliptic operator on the left hand side satisfies the maximum principle. The latter can be seen in the optimal control interpretation for instance, since its first eigenvalue is positive. This implies for instance that $g_h$ is bounded from above by the unique solution $\tilde v$ of
\[
\ba
-\nu \Delta \tilde v + |\nabla_x \tilde v|^2 + \tilde v - (\lambda +h) r(x)\tilde v = r(x) u \text{ in } \Omega,\\
\partial_n \tilde v = 0 \text{ on } \partial \Omega.
\ea
\]
Furthermore, thanks to the stability of this equation, see Proposition \ref{prop:stab}, we know that $\tilde v$ converges uniformly toward a bounded function when $h \to 0$.\\

Thus, because $\|g_h\|_\infty$ is bounded as $h \to 0$, we can indeed consider a weak limit $\tilde g$ of $g_h$ as $h \to 0$. It is clearly the weak solution of
\[
\ba
-\nu \Delta \tilde g + \nabla_xu \cdot\nabla_x \tilde g + \tilde g - \lambda r(x)\tilde g = r(x)u \text{ in } \Omega,\\
\partial_n \tilde g = 0 \text{ on } \partial \Omega.
\ea
\]
Such a solution is unique and the argument is once again that the elliptic operator on the left hand side satisfies the maximum principle. Furthermore, we obtain that $\tilde g$ is $C^\infty$. Taking $h>0$ would have lead to the same thing. Hence, we showed that $\tilde g = F'(\lambda)$.

The proof can now be concluded easily thanks to the implicit function theorem, and we leave the details to the interested reader.
\end{proof}

\subsection{Convergence of the time dependent problem}
In this section we are interested in the behaviour, when $T \to \infty$ of $u(T,\cdot)$ for $u$ the solution of 
\be\label{cauchy}
\ba
\partial_t u - \nu \Delta u + \frac 12 |\nabla_x u|^2 = a(x) u + f \text{ in } (0,\infty)\times \Omega,\\
u|_{t = 0} = u_0,\\
\partial_n u = 0 \text{ on } (0,\infty)\times \partial \Omega,
\ea
\ee
where $u_0,f$ are smooth non-negative function on $\Omega$. We have the following.
\begin{Theorem}\label{thm:cauchy}
There exists a unique smooth non-negative solution $u$ of \eqref{cauchy}. If $f \ne 0$ or $u_0 \ne 0$, then for all $t > 0,x \in \Omega$, $u(t,x) > 0$ and, if the assumptions of Theorem \ref{thm:1} are satisfied, $\lim_{T \to \infty}\|u(T,\cdot) -\bar u\|_\infty = 0$ where $\bar u$ is the solution of \eqref{krf} (or \eqref{kr} in the case $f = 0$) given by Theorem \ref{thm:2} (or Theorem \ref{thm:1} in the case $f = 0$).
\end{Theorem}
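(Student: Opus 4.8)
The plan is to treat the three assertions — well-posedness, positivity, and long-time convergence — separately, the first two by standard parabolic arguments and the last by a monotone sub/super-solution sandwich anchored at the stationary profile $\bar u$. Throughout, the engine is the comparison principle for \eqref{cauchy}, which I would establish first: if $u_1,u_2$ are two smooth bounded solutions, then $w = u_1 - u_2$ solves the \emph{linear} parabolic equation $\partial_t w - \nu\Delta w + B\cdot\nabla_x w - a(x)w = 0$ with $B = \tfrac12(\nabla_x u_1 + \nabla_x u_2)$ and homogeneous Neumann data, so the maximum principle forces $w\equiv 0$; the same computation compares ordered sub- and super-solutions.

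\textbf{Well-posedness and positivity.} Uniqueness is immediate from the comparison principle. For existence I would use the Hopf--Cole substitution $w = e^{-u/(2\nu)}$, which turns \eqref{cauchy} into the semilinear problem
\[
\partial_t w - \nu\Delta w = a(x)\, w\ln w - \tfrac{f}{2\nu}\, w
\]
with Neumann data and $w|_{t=0} = e^{-u_0/(2\nu)} \in (0,1]$. Local existence is standard, and a spatially constant, time-dependent super-solution $\Phi(t)$ solving $\Phi' = (\max_\Omega a)\Phi + \max_\Omega f$ with $\Phi(0)=\|u_0\|_\infty$ gives an a priori bound $0 \le u(t,\cdot) \le \Phi(t)$ on every finite interval (equivalently, $w$ is trapped in a compact subinterval of $(0,1]$), which precludes blow-up and yields a global smooth solution by parabolic regularity; note this step does not use the assumptions of Theorem \ref{thm:1}. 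Non-negativity follows from comparison with the sub-solution $0$ (legitimate since $f\ge 0$), and strict positivity for $t>0$ when $f\ne 0$ or $u_0\ne 0$ is the strong maximum principle applied to the linear operator $\partial_t - \nu\Delta + B\cdot\nabla_x - a$ that $u$ satisfies.

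\textbf{The key super-solution.} The heart of the convergence proof is that, for $m\ge 1$, the dilation $m\bar u$ is a stationary super-solution of \eqref{krf}. Indeed, using $-\nu\Delta\bar u = -\tfrac12|\nabla_x\bar u|^2 + a\bar u + f$,
\[
-\nu\Delta(m\bar u) + \tfrac12|\nabla_x(m\bar u)|^2 - a(m\bar u) - f = \tfrac{m(m-1)}{2}|\nabla_x\bar u|^2 + (m-1)f \ge 0.
\]
I would pick $m$ large enough that $m\bar u \ge u_0$ (possible since $\bar u >> 0$ and $u_0$ is bounded) and let $\overline u(t)$ be the solution of \eqref{cauchy} issued from $m\bar u$. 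Since $m\bar u$ is a time-independent super-solution, comparison gives $\overline u(t)\le m\bar u$, and autonomy of the flow upgrades $\overline u(h)\le\overline u(0)$ to $t\mapsto\overline u(t)$ being non-increasing; because $\bar u$ is a stationary solution below $m\bar u$, comparison also gives $\overline u(t)\ge\bar u$. Hence $\overline u(t)$ decreases, and by parabolic regularity its limit is a stationary solution lying above $\bar u >> 0$, so by the uniqueness of Theorem \ref{thm:1}/\ref{thm:2} (equivalently Lemma \ref{lemma:uniq}) it equals $\bar u$. Then $u(t)\le\overline u(t)$ yields $\limsup_{t\to\infty} u(t,\cdot)\le\bar u$ uniformly. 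For the matching lower bound I would take the positive stationary sub-solution $\phi$ from Lemma \ref{lemma:sub} (the scaled principal eigenfunction, available since $\lambda_1(-\nu\Delta-a)<0$); fixing any $t_0>0$, strict positivity gives $u(t_0,\cdot)\ge\alpha\phi$ for $\alpha$ small, and the solution $\underline u(t)$ issued from $\alpha\phi$ is non-decreasing, bounded above by $\bar u$, hence increases to a stationary solution staying above $\alpha\phi >> 0$, i.e. to $\bar u$. With $u(t)\ge\underline u(t)$ this gives $\liminf_{t\to\infty} u(t,\cdot)\ge\bar u$, and the two bounds together give $\|u(T,\cdot)-\bar u\|_\infty\to 0$.

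\textbf{Main obstacle.} The genuinely delicate points are the parabolic well-posedness and comparison in the presence of the quadratic gradient term: one must make the a priori bounds strong enough — in particular \emph{uniform in time} for the convergence, where the scaling super-solution $m\bar u$ replaces the crude, exponentially growing $\Phi(t)$ — and must handle the Hopf--Cole nonlinearity $w\ln w$, which is only locally Lipschitz away from $w=0$, on the compact range where $w$ is trapped. Once comparison is secured, everything downstream (monotonicity of the two sandwiching flows and the identification of their limits) reduces to comparison plus the already-established uniqueness of the stationary profile, so the observation that $m\bar u$ is a super-solution is really the crux of the argument.
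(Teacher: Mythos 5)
Your proof is correct, but it takes a genuinely different route from the paper. The paper works entirely on the probabilistic side: existence comes from the stochastic optimal control representation of $u(t,x)$ (the value of a finite-horizon problem with terminal cost $u_0$), uniqueness from Gronwall estimates, the upper bound $\limsup_{T}u(T,\cdot)\le\bar u$ from playing the stationary feedback $\alpha_s=-\nabla_x\bar u(X_{t-s})$ together with Lemma \ref{lemma:41} (which kills the terminal term exponentially fast under $\lambda_1(-\nu\Delta-a)<0$), the lower bound from the $v_\eps$ approximation of section \ref{sec:optcontr}, and uniformity from Ascoli--Arzela. You instead run a purely PDE-analytic monotone sandwich: parabolic comparison via linearization of the quadratic term, existence via Hopf--Cole, and then the two anchors $m\bar u$ (a stationary super-solution for $m\ge1$, by the same super-homogeneity of $\tfrac12|p|^2$ that drives Lemma \ref{lemma:uniq}) and $\alpha\phi$ (the scaled principal eigenfunction of Lemma \ref{lemma:sub}), whose associated monotone orbits squeeze onto $\bar u$ by uniqueness of the positive stationary solution. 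Your identification of $m\bar u$ as a super-solution is the right replacement for the paper's Lemma \ref{lemma:41}: it is what converts the finite-time a priori bound into one that is uniform in time. What each approach buys: the paper's argument is shorter because the control machinery is already built in section \ref{sec:optcontr} and it directly explains \emph{why} convergence holds (the terminal cost is discounted away at rate $-\lambda_1$); yours avoids the verification/DPP technicalities the paper itself flags as delicate, is self-contained modulo standard parabolic theory, and yields the extra information that the solution is approached monotonically from above and below. The only points to make explicit in a polished write-up are (i) that comparison between the solution issued from $u_0$ and the orbit issued from $\alpha\phi$ requires restarting at some $t_0>0$ where $u(t_0,\cdot)>>0$ (which you do), and (ii) that the monotone pointwise limits are upgraded to uniform, stationary limits via parabolic interior estimates and Dini, which is routine.
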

\begin{proof}
The existence of such of function can be obtained through the optimal control formula
\[
u(t,x) := \inf_{(\alpha_s)_{s \leq t}}\mathbb{E}\left[ \int_0^te^{\int_0^sa(X^{x,\alpha}_{t-u})du}\left(\frac 12 |\alpha_{t-s}|^2 + f(X^{x,\alpha}_{t-s})\right)ds + e^{\int_0^ta(X^{x,\alpha}_{t-s})ds}u_0(X^{x,\alpha}_t)\right],
\]
where $(X^{x,\alpha}_s)_{s \geq 0}$ is the process defined in \eqref{sde}. Since the previous formula also yields that $u$ is smooth (because so are $a$, $f$ and $u_0$), uniqueness comes from the regularity of the solution, for instance by using standard Gronwall's estimates on the difference of two solutions.

Using the control $\alpha_s = -\nabla_x \bar u(X^{x,\alpha}_{t-s})$ in the definition of $u$, we obtain thanks to Lemma \ref{lemma:41} that, for all $x \in \Omega$, $\limsup_{T \to \infty} u(T,x) \leq \bar u(x)$ as 
\be\label{eq:++}
\lim_{T \to \infty}\mathbb{E}\left[e^{\int_0^ta(X^{x,\alpha}_{t-s})ds}u_0(X^{x,\alpha}_t) \right] = 0.
\ee
 Approximating $\bar u$ with optimal control like formulas as we did in section \ref{sec:optcontr}, we arrive at the relation $\liminf_{T \to \infty}u(T,x) \geq \bar u (x)$ since $u_0 \geq 0$.
 
 It now remains to show that the convergence is uniform. Since \eqref{eq:++} can be made uniform in $x$, we deduce that $u$ is uniformly bounded, hence that it is also the case for its spatial gradient. Hence the result follows from the application of Ascoli-Arzela Theorem on any sequence of the form $(u(t_n,\cdot))_{n \geq 0}$ for $(t_n)_{n\geq 0}$ an increasing  sequence which tends to $+\infty$.

\end{proof}

\subsection{Extension to more general equations}
We briefly comment on some extensions of the results above. First, working with a purely quadratic Hamiltonian is not needed and we could have studied equations of the form
\be\label{krH}
\ba
-\nu\Delta u + H(u,\nabla_x u) = a(x)u \text{ in } \Omega,\\
u(x) > 0 \text{ in } \Omega,\\
\partial_n u = 0 \text{ on } \partial \Omega,
\ea
\ee
where $H: \R \times \R^d \to \R_+$ is an Hamiltonian upon which several assumptions can be made depending on the extension considered.\\

For the comparison principle in Lemma \ref{lemma:uniq}, we only used two properties on the Hamiltonian: i) that it is locally Lipschitz and ii) that it satisfies for any $v>0, p \in \R^d,\theta \in (0,1)$, $H(\theta v,\theta p) < \theta H(v,p)$. And existence can be studied even without this assumption! 

In the a priori estimate in Lemma \ref{lemma:apriori}, we only used the $2$-homogeneity of the Hamiltonian and its non-negativity. Note that any $p$-homogeneity with $p> 1$ is sufficient. Moreover, $L^\infty$ estimates could have been obtained without exact homogeneity assumptions, but simply with general inequality as the one we mentioned above. Also, techniques stemming from the existence of explosive super-solution, such as in Lasry and Lions \citep{lasry1}, could also have been used to obtain such an $L^\infty$ estimate. 

The same type of inequalities are sufficient in Lemma \ref{lemma:sub} and no particular additional assumption is required in the proof of Theorem \ref{thm:1}.\\

Furthermore, in the case $b \ne 0$, i.e. when the left hand side of the PDE of \eqref{kr} contains a term of the form $-b\cdot \nabla_x u$, then all the previous statements remains correct but the eigenvalue that we need to consider is obviously now $\lambda_1(-\nu \Delta - b\cdot \nabla_x -a)$.\\

Finally, under monotonicity assumptions such as in Proposition \ref{prop:5}, we could consider certain systems of coupled equations of the type \eqref{krH}.

\section{Economic consequences and interpretation of the mathematical analysis}\label{sec:eco}
We now specify particular instances of the class of models developed in section \ref{sec:derivation} and analyze the consequences of the results we just proved.

\subsection{Cryptocurrency bubbles as reserve goods against government instabilities}
We consider the case in which the numeraire is the fiat money (we assume that there is only one government) and the other good is a cryptocurrency. The state of the world is described by a variable $x$ that we take to be in $\Omega = [0,1]$. We assume that this variable represents the results of the monetary policies of the government. When $x$ is close to $0$, the taxes are low and inflation is also low. When $x$ is close to $1$, taxes are high and the inflation is also high. Here we shall assume that the state of the economy at time $t$ is given by $X_t$ where $(X_t)_{t \geq 0}$ is the strong solution of the reflected SDE
\[
dX_t = \sqrt{2 \nu}dW_t
\]
on $(0,1)$, for $\nu > 0$. For the sake of shortness, we shall call cash the fiat money and cryptos the cryptocurrencies.\\

In this model, the agents have the possibility to protect themselves from potential future bad monetary policies by buying cryptos, which they can then resell for cash at future times if they want to benefit from good monetary policies. Note that in order for such strategies to be useful, a bubble needs to exist on the price of the cryptos, as they do not produce any value in terms of cash. Hence their intrinsic value is $0$ and they are only bought if the agent speculate that it is going to be valued by other agents at later times, which is the definition of a speculative bubble. 

A natural consequence of the way we define this variable $x$ is to take the return $r_0$ of the government money as a decreasing function of $x$. We make the assumptions that $r_0$ is positive close to $x=0$ and that it is negative close to $x=1$.

On the other hand, the return $r_1$ of the cryptocurrency is negative and a constant in $x$. Recall that this return is measured in the unit of the cryptocurrency itself, so it not depending on $x$ is quite natural. The fact that it is negative arises from the actual implementation of some Blockchain, such as the Bitcoin Blockchain. Indeed, the total amount of Bitcoin is predetermined and some of it has to be spent to pay miners to continue ensuring sufficient security on the Blockchain. Finally we note $K$ the total number of Bitcoins and $N$ the number of agents in the economy. We focus here on the CARA case, denote the CARA parameter by $c$ and thus we note $\eps = c2K\nu/N$.\\

The PDE solved by the value of cryptos $u$ is thus given by
\be\label{bitcoin}
\ba
-\nu u'' + \eps (u')^2 = (r_1 - r_0)(x)u \text{ in } (0,1),\\
u' = 0 \text{ at } 0 \text{ and } 1,\\
u \geq 0.
\ea
\ee

We shall say that a bubble consensus value (C-value) on crypto exists if there is a non-negative non-zero solution of \eqref{bitcoin}, and the solution $u$ is then called the C-value.\\

\subsection{Economic conclusions in the cryptocurrency case}
We now translate the mathematical results of Section \ref{sec:math} in this model.\\

\textit{1) If there is a possibility the government does bad enough and one that it outperforms cryptos in terms of relative returns, then there is a unique C-value on crypto. It is thus the unique equilibrium price of this bubble which is a function of the state of the economy. Taking the effect of the government as variable, there is an explicit threshold after which the C-value exists. It can be computed as the first eigenvalue of an elliptic operator. The C-value is stable (i.e. evolves continuously) with respect to all the parameters of the model, except when $\eps \to 0$, see section \ref{sec:arbitrage}.}\\

This fact follows simply from Theorem \ref{thm:1}. The condition on $\lambda_1(-\nu \partial^2_x +(r_0-r_1))$ will be satisfied if $r_0$ is sufficiently negative near $x=1$, which amounts to say that close to $x=1$ (i.e. the worst case scenario for cash), inflation and taxes can be huge enough. The assumption that $(r_0-r_1)$ should change sign is verified if $r_0(0) > r_1$. The stability is a consequence of Proposition \ref{prop:stab}.\\

\textit{2) If a C-value exists, it increases as the economic conditions $X_t$ deteriorates.}\\

This is simply the consequence of Proposition \ref{prop:5}.\\

\textit{3) If a C-value exists, it increases with the number of agents willing to buy it.}\\

In our model, the number of agents is denoted by $N$. Hence, as $N$ grows, $\eps$ decreases. Hence, as a consequence of Proposition \ref{prop:mono}, the C-value increases (everywhere in $x$) as $N$ grows.\\

\textit{4) The C-value collapses to $0$ if the return of cash increases above a certain threshold, or if the one of crypto decreases below a certain level, or if there are no users, but not if the number of users decreases below a certain positive threshold. Reversely, even if a very small number of agents are interested in cryptos, it may be rational for the price of cryptos not to be zero.}\\

As we saw in the previous facts, as $N$ decreases, the parameter $\eps$ increases, which thus makes the value of the crypto decrease. However, this value $u$ keeps satisfying $u>>0$ for $N> 0$ arbitrary small and only reaches $0$ when $N = 0$. However, if $r_0$ increases sufficiently so that $\lambda_1(-\nu \partial_x^2 + (r_0-r_1)) = 0$, then instantly $u\equiv 0$, even if $r_0(1)$ is still negative. In particular, a bubble can exist with very few agents which are interested or concerned. The bubble will then be quite small as the price $u$ will be low, but it will still exhibits the stability properties proved in section \ref{sec:math}.\\

A reformulation of the previous fact can be.\\

\textit{4 bis) Even when very few persons were concerned about Bitcoin, it was rational that a very small but non-zero C-value existed at the time.}\\

\textit{5) The C-value increases as the return of cryptos increases. It also increases as the return of fiat money decreases.}\\

This is simply a consequence of Proposition \ref{prop:mono2}.\\

\textit{6) The C-value has no particular monotonicity with respect to the parameter $\nu$.}\\

This reminds us that $\nu$ does not play the role of risk but rather of volatility, as risk is a consequence of both volatility and the variation of $r_0$ in $x$. Nonetheless, if $\nu$ goes to $\infty$, the C-value goes to $0$. This is easily seen as, dividing \eqref{bitcoin} by $\nu$, we obtain that this corresponds to the limit $r_0,r_1 \to 0$.

\subsection{The case of gold}
From the point of view of our model, if we replace cryptocurrencies with gold, we end up with a fairly similar situation so we do not detail it here. 

\subsection{The model of Biais, Rochet and Villeneuve and the problem of the governments}
In \citep{brv}, the authors proposed a model to justify the use of cryptocurrencies by agents to protect themselves against non-benevolent governments which could increase taxes and inflation. In their model, the randomness is apparent in the evolution of the wealth on the individuals agents as well as on the possibility for the crypto asset to crash. We took several ideas from their modelling even though we ended up with quite a different problem. To transpose their models in terms of ours, we could say that in their model, the government chooses the level of the variable $X_t$, which has no intrinsic variation such as the one given by the SDE in our model. Idiosyncratic and exogenous variations of the wealth of the agents could have been considered when deriving our optimal investment $\theta^*$, and the probability for the price of the "other" good to go to $0$ instantly could also been considered. Note that the last phenomenon simply consists, given a constant intensity of crash $\beta$, to replace $r_1$ by $r_1 - \beta$.\\

In our framework, the best way to incorporate decisions from the government should be to assume that the government controls the SDE which drives the evolution of $(X_t)_{t \geq 0}$. For instance, that the government  could choose an adapted process $(\alpha_t)_{t \geq 0}$ and $(X_t)_{t \geq 0}$ would be given as the solution of
\[
dX_t = \alpha_t dt + \sigma dW_t,
\]
omitting the terms from the reflection to simplify the discussion. By committing to setting $\alpha_t = b(X_t)$ for a function $b$ that the government chooses and enforces, the problem of the latter is the one of the optimal control of the price $u$, solution of \eqref{pdev1} for instance, through the choice of $b$. This would very much be in the flavor of the work \citep{brv}. Remark that in such cases, by choosing the function $b$, the government in fact controls the $\lambda_1(-\nu \partial_x^2 - b \partial_x + (r_0-r_1))$. Hence, it can increase it so that the only bubble equilibrium price is $0$.\footnote{A simple computation yields that this is always possible for the government.}

Another, probably more realistic approach, could consists in saying that the government does not commit to anything, and that at time $t$, it simply chooses the $\alpha_t$ that suits it best. In this framework, the government now faces a standard stochastic optimal control problem, but it is coupled with the problem of all agents. In the limit of an infinite number of agents, we then end up with a mean field game with a major player, which is known to raise serious mathematical difficulties and of whose well-posedness is not understood at the moment and only partial results are known \cite{edmond,carmona2018probabilistic,lasrylionsmajor,bertucci2020strategic}. Even with a finite number of agents, we end up with a general $N+1$ players differential game, and such games are known to be notoriously difficult to analyze.

\subsection{Links with a no-arbitrage equation}\label{sec:arbitrage}
We explain here why the case $\eps = 0$ in \eqref{bitcoin} is the no-arbitrage equation in such a model.

If there is no arbitrage, then we must find that for any time length $h > 0$, buying an amount $q$ of cryptos for a total value of $1$ at time $t$ and selling it at time $t+h$ yields no revenue compared to holding the amount $1$ in cash between $t$ and $t+dt$. The latter yields a revenue of 
\[
\mathbb{E}\left[e^{\int_t^{t +h}r_0(X_s)ds}|X_t\right] -1,
\]
while the former yields
\[
\frac{\mathbb{E}\left[e^{\int_t^{t+h}r_1(X_s)ds}u(X_{t+h})|X_t\right]}{u(X_t)} -1.
\]
Dividing the two equations by $h$ and taking the limit $h \to 0$ yields $r_0(X_t)$ in the first expression and 
\[
r_1(X_t) + \frac{\sigma^2 u''(X_t)}{2u(X_t)}
\]
in the second one. Equalling the two quantities and multiplying by $u(X_t)$ we find precisely \eqref{bitcoin} in the case $\eps = 0$. This equation has only $0$ as a solution in general, i.e. when $0$ is not an eigenvalue of $-\sigma^2/2\partial^2_x + (r_0 - r_1)$.\\

Hence, in our model, risk neutral arbitragers should make the price of cryptos goes to $0$, while the risk aversion of our agents create a non-zero price. Indeed, in \eqref{bitcoin}, the parameter $\eps > 0$ is a consequence of the risk aversion of our agents. Note that the price of cryptos does not behave in a continuous manner in the limit $\eps \to 0$, as it explodes whereas, at the limit, it is simply $0$.

We believe this is a simple illustration of the classical fact that risk aversion prevents in general arbitrages to be made.

\subsection{The case of real estate in certain locations}
We now want to explain why in certain cases, real estate can play the same role as cryptos or gold for the protection from potential bad public policies.

Of course real estate is not by essence limited in quantity, nor it is without any return. Nonetheless, in certain precise locations such as the centre of some major cities, we argue that real estate indeed behaves as a bubble asset. First, it is clear that if one add the location of the real estate, then we can freely assume that it exists in finite and fixed quantity only, since such places typically do not allow for new buildings. Second, even if real estate produces returns in the form of rent, the market price to buy real estate seems arbitrary large compared to expected returns in rent that it can generates. A good example of such a situation is Paris, where a firm limitation on rent has been instated, and it has not translated in any sort of limit for the market price of real estate, although strong french protection of the renter should translates in a rational decision to rent its main house rather than buying it. We argue, quite classically, that it is because a non-negligeable proportion of its price behaves as the price of a bubble asset, and that the observed stability of this price can be explained by our mathematical analysis.\\

Assuming in this case that real estate is the "other" good, and that it produces a revenue $f \geq 0 $ while facing, like cryptos, a constant negative interest rate because of usual cost of exploitation, we end up with the following equation to characterize the price of real estate
\be\label{realestate}
\ba
-\nu u'' + \eps (u')^2 = (r_1 - r_0)(x)u + f\text{ in } (0,1),\\
u' = 0 \text{ at } 0 \text{ and } 1,\\
u \geq 0,
\ea
\ee
where $\eps$ is here given, in the case of the CRRA utility for instance, as $\eps = \gamma K\sigma^2/Q$, where $\gamma$ is the risk paramter, $K$ the volume of available real estate, $\sigma$ the volatility parameter and $Q$ the total wealth. We keep $r_0$ as a non-decreasing function. In this case, under certain assumptions, the price of real estate may include a bubble C-value component.

\subsection{Stylized facts in the case of real estate}
\textit{7) Even with a strong limit on the rent, the price of real estate can still be rationally arbitrary large in certain prestigious locations, i.e. it can contain a C-value component which can be arbitrary large compared to $f$.}\\

Thanks to Theorem \ref{thm:2}, we know that if $r_0$ is such that $\lambda_1(-\nu \partial^2_x +(r_0-r_1)) < 0$, then the unique solution of \eqref{realestate} is larger than the solution associated to the same problem with $f = 0$. The latter goes to $+\infty$ as $\eps \to 0$, thus independently of $f$, hence the result.\\

\textit{8) The more unique the place, the higher its C-value component.}\\

This can be seen once again through the monotone dependency of $u$ in the parameter $\eps$. Indeed, the rarity of the place translates as the fact that the total number of comparable places $K$ decreases. Hence the result.

\subsection{Have bubbles disappeared ?}
Not entirely. As we mentioned in section \ref{sec:derivation}, we are only able to characterize through our PDE prices which are of the form $p_t = u(X_t)$. Hence, the uniqueness of solutions to \eqref{bitcoin} for instance, does not prevent a priori the existence of more bubbly equilibrium paths, where $p_t$ just grows arbitrary large. We do not believe this to be an issue as this type of phenomenon could occur on any good.\\

Another interesting way to see arbitrary large bubbles appear is through the equation \eqref{pdev3} obtained in the CRRA case. In the case where almost all wealth $Q_t$ is invested into the "other" good, it is natural to assume $Q[u](x)$ is some linear function of $u$. In fact, it could almost be $K u(x)$. In such a situation, even if a non-linearity is present in \eqref{pdev3}, because it is homogenous of order $1$ in $u$, our mathematical analysis completely fails as the equation becomes homogenous of order $1$, and thus, if it has a non-zero solution, then it necessary have a infinite number of them. In particular, it has an arbitrary large solutions, which we believe is an interesting way of understanding how unstable bubbles can be created from a stable one.

\subsection{What constitues a potential bubble asset ?}
Following Samuelson \citep{samuelson1958} and Wallace \citep{wallace}, several characteristics of money seem to be of importance: exchangeability, non-perishability and uselessness. In our model, we did not focus on exchangeability and assume that it is always the case. However we provided insights on the other two properties and we now take the time to explain why.

The perishability of money can often be thought of in terms of inflation or interest rates. In our model, we have proven that for a C-value to form on an asset, it needs to be "less" perishable than fiat money, with a quantitative estimate of this non-perishability which is quite non-obvious since it is measured through the sign of the first eigenvalue of an elliptic operator. Hence, a low perishability is important, and a small one is tolerated by the agent to form a C-value.\\

Our model and the analysis above does not need the bubble asset to be useless, which can seem quite counterintuitive for such assets. In our model, agents buy the bubble asset as an insurance, to protect themselves from inflation for instance. So it is natural they have no problem with their asset being useful. Because we are not restricting ourselves to bubble on money assets, this uselessness condition does not appear here. Then the questions remains to understand why C-value seems to form in a stronger manner on useless assets in practice. We believe that this may be due to the fact that non-perishability and uselessness might be linked. Indeed, if an agent owns an extremely useful good, it is possible that this agent anticipates the risk of this good to be taken from him because of its usefulness, thus decreasing its non-perishability from its own point of view. 

\section{Conclusion and future perspectives}
We proposed a radically novel way to justify the stability of the price of certain bubble assets. Our method relies on the mathematical analysis of a simple yet new PDE. We have exhibited conditions under which it is rational for the agents to form a consensus on a non zero price for assets which generates nothing, as it allows them to protect themselves against public policies which are potentially armful to their portfolio in government money, or more generally in traditional assets. This bubble equilibrium price enjoys very robust stability and uniqueness properties, and furthermore it can be characterized as the unique positive solution of a quite simple but non-linear PDE.\\

Even though our story is not complete yet, we believe that this approach could be the entry point for several studies. We indicate here three directions of investigation which we think should bear some interest and upon which we are currently working.
\begin{itemize}
\item The derivation of the PDE at interest in the CRRA case could be addressed with more care, in particular with regards to the variation of the total wealth.
\item More generally, the demand of "other" good could depend on the entire repartition of wealth in the population, as in real life, borrowing constraints usually prevents agents with low wealth to invest as much as they would like. This extension should lead to interesting modelings much more in the spirit of mean field games \citep{lasry2007mean}. The same goes for considering a population of heterogenous agents, in their risk parameters for instance.
\item We have explained that several goods can serve as protection. The natural following question is to understand why certain goods should be favored compared to others. This type of questions leads to the study of coupled equations of the form of \eqref{bitcoin}, for which no mathematical analysis exists in the literature at the moment, and that we are currently investigating.
\end{itemize}

\bibliographystyle{plainnat}
\bibliography{bibbubble}

\end{document}